\makeatletter \newcommand{\dashedrightarrow}[1][2pt]{%
\settowidth{\@tempdima}{$\rightarrow$}\rightarrow
\makebox[-\@tempdima]{\hskip-1.5ex\color{white}\rule[0.5ex]{#1}{1pt}}
\phantom{\rightarrow}
} 
\theoremstyle{plain}
\newtheorem{theorem}{Theorem}[section]
\newtheorem*{theorem*}{Theorem}
\newtheorem{proposition}[theorem]{Proposition}
\newtheorem{conjecture}[theorem]{Conjecture}
\newtheorem{lemma}[theorem]{Lemma}
\newtheorem{corollary}[theorem]{Corollary}
\theoremstyle{remark}
\newtheorem*{remark}{\/Remark}
\theoremstyle{definition}
\newtheorem{definition}[theorem]{Definition}
\newcommand{\alb}{\operatorname{alb}}
\newcommand{\ab}{\mathrm{ab}}
\newcommand{\m}{\mathfrak{m}}
\newcommand{\Gal}{\operatorname{Gal}}
\newcommand{\Z}{\mathbb{Z}}
\newcommand{\Qp}{\mathbb{Q}_{p}}
\newcommand{\kbar}{\overline{k}}
\newcommand{\inj}{\hookrightarrow}
\renewcommand{\Im}{\operatorname{Im}}
\newcommand{\Coker}{\operatorname{Coker}}
\newcommand{\Ker}{\operatorname{Ker}}
\newcommand{\isomto}{\stackrel{\simeq}{\longrightarrow}}
\newcommand{\onto}[1]{\stackrel{#1}{\to}}
\newcommand{\onlong}[1]{\stackrel{#1}{\longrightarrow}}
\newcommand{\ol}[1]{\overline{#1}}
\newcommand{\wt}[1]{\widetilde{#1}}
\newcommand{\wh}[1]{\widehat{#1}}
\newcommand{\Cf}{\textit{cf.}\;}
\def\sn{\smallskip\noindent}
\def\ssm{\smallsetminus}
\newcommand{\Gm}{\mathbb{G}_{m}}
\newcommand{\CH}{\operatorname{CH}}
\newcommand{\Spec}{\operatorname{Spec}}
\newcommand{\dP}{\operatorname{\partial}_P}
\newcommand{\F}{\mathbb{F}}
\newcommand{\geo}{\mathrm{geo}}
\newcommand{\tor}{\mathrm{tor}} 
\newcommand{\kP}{k(P)}
\newcommand{\Kt}{K^{\times}}
\newcommand{\kt}{k^{\times}}
\newcommand{\mk}{\m_k}
\newcommand{\mup}{\mu_{p}}
\newcommand{\Ok}{\O_k}
\newcommand{\Ubar}{\ol{U}}
\newcommand{\Ubark}{\Ubar_k}
\newcommand{\UbarK}{\Ubar_K}
\renewcommand{\O}{\mathcal{O}}
\newcommand{\otimesZ}{\otimes_{\Z}}
\newcommand{\otimesM}{\stackrel{M}{\otimes}}
\newcommand{\Res}{\mathrm{Res}}
\newcommand{\ktp}{\kt/p}
\newcommand{\Ktp}{\Kt/p}
\title{Milnor $K$-groups attached to elliptic curves over a $p$-adic field}
\author{Toshiro Hiranouchi}
\begin{document}
\maketitle

\begin{abstract}
We study the Galois symbol map 
of the Milnor $K$-group attached to 
elliptic curves over a $p$-adic field. 
As by-products, 
we determine 
the structure of 
the Chow group  
for the product of elliptic curves 
over a $p$-adic field under some assumptions. 

\sn
2010 \emph{Mathematics Subject Classification}: Primary 11G07; Secondary 11G07.

\sn
\emph{Key words and phrases}: Elliptic curves, Chow groups, Local fields.
\end{abstract}

\section{Introduction}
K.~Kato and M.~Somekawa 
introduced in \cite{Som90} the Milnor type $K$-group 
$K(k;G_1,\ldots ,G_q)$ 
attached to semi-abelian varieties $G_1,\ldots ,G_q$ 
over a field $k$ 
which is now called the Somekawa $K$-group.  
The group is defined by the quotient 
\begin{equation}
\label{eq:Som} 
  K(k;G_1,\ldots,G_q) := 
\left(\bigoplus_{k'/k:\,\mathrm{finite}} G_1(k') \otimesZ \cdots \otimesZ G_q(k')\right)/R
\end{equation}
where $k'$ runs through all finite extensions over $k$ 
and $R$ is the subgroup 
which produces ``the projection formula'' and 
``the Weil reciprocity law'' 
as in the Milnor $K$-theory 
(Def.\ \ref{def:Somekawa}). 
As a special case, 	
for the multiplicative groups 
$G_1 = \cdots = G_q = \Gm$, 
the group $K(k;\overbrace{\Gm,\ldots ,\Gm}^q)$ is 
isomorphic to $K_q^M(k)$
the ordinary Milnor $K$-group 
of the field $k$ (\cite{Som90}, Thm.\ 1.4). 
For any positive integer $m$ prime to the characteristic of $k$, 
let $G_i[m]$ be the Galois module defined by 
the kernel of $G_i(\kbar) \onto{m} G_i(\kbar)$ the multiplication by $m$. 
Somekawa defined also the Galois symbol map 
\[
  h :K(k;G_1,\ldots ,G_q)/m \to H^q(k,G_1[m]\otimes \cdots \otimes G_q[m])
\]
by the similar way as in the classical Galois symbol map 
$K_q^M(k)/m \to H^q(k, \mu_m^{\otimes q})$ 
on the Milnor $K$-group.  
He also presented a conjecture 
in which the map $h$ is injective 
for arbitrary field $k$. 
For the case $G_1 = \cdots = G_q = \Gm$, 
the conjecture holds by 
the Milnor-Bloch-Kato conjecture, 
now is a theorem of Voevodsky, Rost, and Weibel 
(\cite{Wei09}). 
However, it is also known that 
the above conjecture does not hold 
in general 
(see Conj.\ \ref{conj:SK} and its remarks below for the other known results).

The aim of this note is 
to show this conjecture 
for elliptic curves over a local field 
under some assumptions. 

%


\begin{theorem}[Thm.\ \ref{thm:main2}, Prop.\ \ref{prop:div}]
\label{thm:main}
$\mathrm{(i)}$  
Let  
$E_1,\ldots ,E_q$ be elliptic curves over $k$ 
with $E_i[p] \subset E_i(k)$ ($1\le i\le q$). 
Assume that $E_1$ is not a supersingular elliptic curve. 
Then, for $q\ge 3$,
\[
    K(k;E_1,\ldots , E_q)/p^n = 0
\]

\sn
$\mathrm{(ii)}$ 
Let  $E_1, E_2$ be elliptic curves over $k$ 
with $E_i[p^n] \subset E_i(k)$ $(i=1,2)$. 
Assume that $E_1$ is not supersingular. 
Then,  
the Galois symbol map 
\[
  h^2: K(k;E_1, E_2)/p^n \to H^2(k, E_1[p^n] \otimes E_2[p^n])
\]
is injective. 
\end{theorem}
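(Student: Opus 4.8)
The plan is to deduce the injectivity in (ii) by equipping both the source and the target of $h^2$ with filtrations and comparing their graded pieces, where the non-supersingular hypothesis on $E_1$ lets me import the classical norm residue isomorphism; part (i) will then drop out from $p$-divisibility together with $\operatorname{cd}_p k = 2$.

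First I would analyse the target by local duality. Because $E_i[p^n] \subset E_i(k)$ and the Weil pairing $E_i[p^n]\times E_i[p^n]\to \mu_{p^n}$ is Galois-equivariant and surjective, the hypothesis forces $\mu_{p^n}\subset k$ and makes each $E_i[p^n]$ the trivial Galois module $(\Z/p^n)^{2}$. Hence $E_1[p^n]\otimes E_2[p^n]\cong (\Z/p^n)^{4}$, and since $\mu_{p^n}\cong \Z/p^n$ over $k$ and $H^2(k,\mu_{p^n})\cong \Z/p^n$, local Tate duality gives $H^2(k,E_1[p^n]\otimes E_2[p^n])\cong (\Z/p^n)^{4}$, finite and perfectly self-paired under cup product. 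Dualizing, injectivity of $h^2$ becomes equivalent to the left non-degeneracy of the induced pairing of $K(k;E_1,E_2)/p^n$ against $\Hom(E_1[p^n],E_2[p^n])\cong H^0(k,(E_1[p^n]\otimes E_2[p^n])^{*})$, where on a symbol $\{a_1,a_2\}$ the pairing is computed by cupping the Kummer classes $\delta a_i\in H^1(k,E_i[p^n])$ and applying $\operatorname{inv}_k$.

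Next I would filter the source. Using the reduction filtration $E_i(k')\supset E_i^{1}(k')\supset\cdots$ on each finite extension $k'/k$, where $E_i^{1}$ is the kernel of reduction (the formal group points), and using the defining relations $R$ (projection formula and Weil reciprocity), I would show that $K(k;E_1,E_2)/p^n$ is finite and carries an induced finite filtration whose graded pieces are symbol groups built from the finite reductions $\wt{E}_i$ and the formal groups $\hat E_i$, compatibly with the corresponding splitting of the target into its four $\Z/p^n$-factors. The crucial input is then that $E_1$ is not supersingular: its formal group $\hat E_1$ has height one, so $\hat E_1[p^n]\cong \mu_{p^n}$ and the connected–étale sequence of $E_1[p^n]$ splits as $\mu_{p^n}\oplus \Z/p^n$ over $k$. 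On the graded level this replaces the $E_1$-factor of each symbol by a $\Gm$-factor, turning the relevant graded pieces into Milnor-type groups $K(k;\Gm,-)/p^n$, for which the Galois symbol is the classical norm residue map; by the Milnor-Bloch-Kato conjecture, now the theorem of Voevodsky-Rost-Weibel (\cite{Wei09}), together with $K(k;\Gm,\ldots,\Gm)\cong K^{M}_{*}(k)$ (\cite{Som90}), these symbols are isomorphisms mod $p^n$. Matching them against the dual description of the target from the previous step yields injectivity of $h^2$ on each graded piece, hence injectivity of $h^2$.

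The main obstacle is precisely this middle step: making the passage $\hat E_1 \rightsquigarrow \Gm$ rigorous at the level of Somekawa symbols while keeping the finite reduction contributions $\wt{E}_i$ under control, and checking that $h^2$ genuinely interchanges the filtration on the source with that on the target. This is also exactly where supersingularity must be excluded — then $\hat E_1$ has height two, there is no $\Gm$ against which to compare, and the reduction to the classical symbol collapses. Finally, part (i) follows from the same analysis: for $q\ge 3$ the non-supersingular hypothesis again reduces each graded piece of $K(k;E_1,\ldots,E_q)/p$ to a classical Milnor-type group, which vanishes over the local field $k$ whose $p$-cohomological dimension is two (equivalently $H^q(k,\bigotimes_i E_i[p])=0$ since $\operatorname{cd}_p k=2$). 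Thus $K(k;E_1,\ldots,E_q)/p=0$, so the group is $p$-divisible, and therefore $K(k;E_1,\ldots,E_q)/p^n=0$ for every $n$.
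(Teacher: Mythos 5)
Your outline correctly isolates where the difficulty lies, but the step you defer --- ``making the passage $\hat E_1 \rightsquigarrow \Gm$ rigorous \dots while keeping the \dots contributions under control'' --- is not a technical formality to be checked later; it is the entire content of the theorem, and the reduction you propose does not actually land in classical Milnor $K$-theory. The problem is the second curve. In the main case of interest $E_2$ is supersingular, and then the Kummer map identifies $E_2/p$ not with anything built from $\Gm/p$ or an \'etale quotient, but with the sub-Mackey functor $\Ubar^{p(e_0-t_0)}\oplus\Ubar^{pt_0}$ of $(\Gm/p)^{\oplus 2}$ given by higher unit groups (formula (\ref{eq:image}), resting on Kawachi's computation and the canonical-subgroup invariant $t_0$). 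So even after the $E_1$-factor has been traded for $\Gm/p$ or $\Ubar^0\oplus\Ubar^{pe_0}$, the pieces you must control are Mackey products such as $\Ubar^0\otimesM\Ubar^{m}$ and $\Gm/p\otimesM\Ubar^{m}$, not $K(k;\Gm,\Gm)/p\simeq K_2^M(k)/p$. The norm residue isomorphism gives injectivity of the symbol on the full product $\Gm/p\otimesM\Gm/p$, but the natural map $\Ubar^0\otimesM\Ubar^m\to\Gm/p\otimesM\Gm/p$ is a map out of a product generated by subfunctors and has no a priori reason to be injective; proving that $h^{0,m}$ is injective on $\Ubar^0\otimesM\Ubar^m$ (and that $\Ubar^{pe_0}\otimesM\Ubar^m=0$) is exactly the paper's Lemma \ref{lem:key}, whose proof occupies Section \ref{sec:unit} and requires explicit Hilbert-symbol estimates, an Artin--Schreier computation on the residue field, and an induction over ramification --- none of which is supplied by Bloch--Kato. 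Your reduction filtration by formal groups also leaves the height-two formal group $\hat E_2$ sitting in a graded piece with no $\Gm$ to compare it to, which is precisely the configuration your own argument concedes it cannot handle.

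Two smaller points. The local-duality reformulation of injectivity as non-degeneracy of a pairing buys nothing here and can be dropped. For part (i), vanishing of $H^3(k,\bigotimes_i E_i[p])$ from $\operatorname{cd}_p k=2$ says nothing about the source unless injectivity in degree $3$ is already known, which would be circular; the paper instead invokes the vanishing of the Mackey products $(\Ubar^0)^{\otimesM i}\otimesM(\Gm/p)^{\otimesM j}$ for $i+j\ge 3$ (Lemma \ref{lem:4.2.1}, from Raskind--Spie\ss), again a statement about unit-group subproducts rather than about $K_3^M(k)/p$.
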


The theorem above is known when 
$E_i$'s have 
semi-ordinary reduction 
(= good ordinary or multiplicative reduction)
(\cite{Yam05}, \cite{RS00}, see also \cite{MR09}). 
Hence our main interest is in 
supersingular elliptic curves.

In our previous paper \cite{HH13}, 
we investigate the image of the Galois symbol map $h^2$. 
As byproducts, 
we obtain the structure of 
the Chow group 
$\CH_0(E_1\times E_2)$ of $0$-cycles. 
By Corollary 2.4.1 in \cite{RS00}, 
we have 
\[
  \CH_0(E_1\times E_2) \simeq \Z \oplus E_1(k)\oplus E_2(k) \oplus K(k;E_{1}, E_{2}). 
\]
The Albanese kernel 
$T(E_1\times E_2) := \Ker(\alb: \CH_0(E_1\times E_2)^0 \to (E_1\times E_2)(k))$ 
coincides with the Somekawa $K$-group $K(k;E_1,E_2)$, 
where $\CH_0(E_1 \times E_2)^0$ 
is the kernel of 
the degree map $\CH_0(E_1\times E_2) \to \Z$.  
Mattuck's theorem \cite{Mat55} 
implies the following: 

\begin{corollary}
\label{thm:Chow}
Let $E_1$ and $E_2$ be elliptic curves over $k$ with 
good or split multiplicative reduction. 
Assume that $E_1$ is not a 
supersingular elliptic curve   
and $E_i[p^n] \subset E_i(k)$. 
Then, we have 
\[ 
\CH_0(E_1\times E_2)/p^n  \simeq 
 \begin{cases}
  (\Z/p^n)^{2[k:\Qp]+ 6},& \mbox{if $E_1$ and $E_2$ have a same reduction type},\\
  (\Z/p^n)^{2[k:\Qp] +7 },& 
\mbox{otherwise}.
  \end{cases}
\]
\end{corollary}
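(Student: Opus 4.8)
The plan is to read off the structure from the direct-sum decomposition quoted above and to evaluate each summand modulo $p^n$. Since
\[
\CH_0(E_1\times E_2)\simeq \Z\oplus E_1(k)\oplus E_2(k)\oplus K(k;E_1,E_2)
\]
is a direct sum, reduction modulo $p^n$ gives
\[
\CH_0(E_1\times E_2)/p^n\simeq \Z/p^n\oplus E_1(k)/p^n\oplus E_2(k)/p^n\oplus K(k;E_1,E_2)/p^n,
\]
so it suffices to compute the three non-trivial factors, the copy of $\Z$ contributing a single $\Z/p^n$.

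For the factors $E_i(k)/p^n$ I would invoke Mattuck's theorem, which gives $E_i(k)\simeq \Z_p^{[k:\Qp]}\oplus F_i$ with $F_i$ finite. The hypothesis $E_i[p^n]\subset E_i(k)$ means that the full $p^n$-torsion $E_i(\kbar)[p^n]\simeq(\Z/p^n)^2$ already lies in $E_i(k)$; hence the $p$-primary part of $F_i$ is $\Z/p^{a}\oplus\Z/p^{b}$ with $a,b\ge n$, so that $F_i/p^n\simeq(\Z/p^n)^2$. Consequently $E_i(k)/p^n\simeq(\Z/p^n)^{[k:\Qp]+2}$ for $i=1,2$, and this value is uniform over all the admitted reduction types.

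The heart of the argument is the factor $K(k;E_1,E_2)/p^n$. Because $E_i[p^n]\subset E_i(k)$, the Galois action on $E_i[p^n]\simeq(\Z/p^n)^2$ is trivial, and the Weil pairing forces $\mupn\subset k$; therefore $E_1[p^n]\otimes E_2[p^n]\simeq(\Z/p^n)^{\oplus 4}$ as a trivial Galois module, and by local class field theory ($H^2(k,\mupn)=\Br(k)[p^n]\simeq\Z/p^n$) one gets $H^2(k,E_1[p^n]\otimes E_2[p^n])\simeq(\Z/p^n)^{\oplus 4}$. By Theorem \ref{thm:main}(ii) the Galois symbol $h^2$ is injective, so $K(k;E_1,E_2)/p^n$ embeds into this group. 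I would then identify its image with the subgroup generated by the cup products $\delta_1(x)\cup\delta_2(y)$ of Kummer classes of points $x\in E_1(k)$, $y\in E_2(k)$, and compute its rank from the way the Kummer images $\delta_i(E_i(k)/p^n)\subset H^1(k,E_i[p^n])$ pair under the cup product. The outcome, which is precisely what splits the statement into two cases, is that the image is $\simeq\Z/p^n$ when $E_1$ and $E_2$ share the same reduction type and is $\simeq(\Z/p^n)^2$ otherwise; here one also uses the divisibility of Proposition \ref{prop:div} to see that each cyclic factor has exponent exactly $p^n$.

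Combining the three computations, the number of copies of $\Z/p^n$ is $1+([k:\Qp]+2)+([k:\Qp]+2)+r=2[k:\Qp]+5+r$ with $r=1$ or $r=2$ according to the reduction type, which yields the two displayed values. The main obstacle is this last rank computation: injectivity of $h^2$ alone only places $K(k;E_1,E_2)/p^n$ inside $(\Z/p^n)^4$, and extracting the precise isotropy of the Kummer images — and hence the dependence on whether the reduction types agree — is the delicate local input supplied by Theorem \ref{thm:main} and the image analysis of \cite{HH13}.
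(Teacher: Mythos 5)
Your proposal is correct and follows essentially the same route the paper intends: the Raskind--Spiess decomposition of $\CH_0(E_1\times E_2)$, Mattuck's theorem together with the rationality of $E_i[p^n]$ to get $E_i(k)/p^n\simeq(\Z/p^n)^{[k:\Qp]+2}$, and the injectivity of $h^2$ (Theorem \ref{thm:main}(ii)) combined with the image computation of \cite{HH13} to get the rank-$1$ versus rank-$2$ dichotomy for $K(k;E_1,E_2)/p^n$. One small correction: Proposition \ref{prop:div} concerns only the vanishing for $q\ge 3$ and plays no role here; the fact that the image is a free $\Z/p^n$-module of the stated rank is part of the \cite{HH13} image analysis (in the spirit of the filtration computations of Lemma \ref{lem:key}), not a consequence of that proposition.
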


\sn
{\it Outline of this note.}\   
In Section \ref{sec:Somekawa}, 
we recall the definition and some properties 
of Somekawa $K$-group $K(k;G_1,\ldots ,G_q)$
attached to semi-abelian varieties $G_1,\ldots , G_q$ 
over a perfect field $k$. 
We also introduce the Mackey product 
$G_1\otimesM \cdots \otimesM G_q$  
which is defined as in (\ref{eq:Som})
but by factoring out a relation 
concerning ``the projection formula'' only. 
In Section \ref{sec:unit} we study 
the structure of the Mackey product $\Ubar^m \otimesM \Ubar^n$ 
over a $p$-adic field $k$. 
Here, $\Ubar^m$ is the Mackey functor 
defined by the higher unit groups of finite extensions over $k$ 
as a sub Mackey functor of 
the cokernel $\Gm/p$ of the multiplication by $p$ on $\Gm$. 
Tate \cite{Tat76}, Raskind and Spie\ss\ \cite{RS00} 
show that 
the Galois symbol map 
induces bijections 
\[
h^2:\left(\Gm/p \otimesM \Gm/p\right) (k) \isomto K_2^M(k)/p \stackrel{h}{\isomto} H^2(k,\mup^{\otimes 2}).
\]
We further calculate the kernel and the image of the composition 
\[
  h^{m,n}: \left(\Ubar^m\otimesM \Ubar^n\right) (k) \to \left(\Gm/p\otimesM \Gm/p\right) (k) \onto{h^2} H^2(k,\mup^{\otimes 2})  
\]
and determine the structure of $\Ubar^m \otimesM \Ubar^n$ partially. 
The proof of Theorem \ref{thm:main}   
is given in Section \ref{sec:local}. 

Throughout this note, 
for an abelian group $A$ and a non-zero integer $m$, 
let $A[m]$ be the kernel and $A/m$ the cokernel 
of the map $m:A\to A$ defined by multiplication by $m$. 
For a field $k$, 
we denote by
$G_k := \Gal(\ol{k}/k)$ the absolute Galois group of $k$ 
and 
$H^i(k, M) := H^i(G_k, M)$ the Galois cohomology group of $G_k$ 
for some $G_k$-module $M$. 
The tensor product $\otimes$ means $\otimesZ$. 

\subsection*{Acknowledgements}
A part of this note was written 
during a stay of the author at the Duisburg-Essen university. 
He thanks the institute for its hospitality. 
This work was supported by KAKENHI 30532551.

\section{Somekawa $K$-groups}
\label{sec:Somekawa}

Throughout this section, $k$ is a perfect field. 

\begin{definition}
\label{def:Mack}
A {\it Mackey functor} $A$ over $k$ 
is a contravariant 
functor from the category of \'etale schemes over $k$ 
to the category of abelian groups 
equipped with a covariant structure 
for finite morphisms 
such that 
$A(X_1 \sqcup X_2)  = A(X_1) \oplus A(X_2)$ 
and if 
\[
\xymatrix@C=15mm{
  X' \ar[d]_{f'}\ar[r]^{g'} & X \ar[d]^{f} \\
  Y' \ar[r]^{g} & Y
}
\]
is a Cartesian diagram, then the induced diagram 
\[
\xymatrix@C=15mm{
  A(X') \ar[r]^{{g'}_{\ast}} & A(X)\\
  A(Y') \ar[u]^{{f'}^{\ast}} \ar[r]^{{g}_{\ast}} & A(Y)\ar[u]_{f^{\ast}}
}
\]
commutes. 
\end{definition}

For a Mackey functor $A$, 
we denote by $A(K)$ 
its value $A(\Spec(K))$  
for a field extension $K$ over $k$.

\begin{definition}
\label{def:Mackey}
For Mackey functors $A_1,\ldots ,A_q$, 
their \textit{Mackey product} 
$A_1\otimesM \cdots \otimesM A_q$ 
is defined as follows: 
For any finite field extension $K/k$, 
\[
K \mapsto 
\left(A_1\otimesM \cdots \otimesM A_q \right) (K) 
 := \left(\bigoplus_{L/K:\, \mathrm{finite}} A_1(L) \otimes \cdots \otimes A_q(L)\right) /R, 
\]
where $R$ is the subgroup generated 
by elements of the following form: 

\sn
(PF) 
For any finite field extensions 
$K \subset K_1 \subset K_2$, and 
if $x_{i_0} \in A_{i_0}(K_2)$ and $x_i \in A_i(K_1)$ 
for all $i\neq i_0$, then 
\[
  j^{\ast}(x_1) \otimes \cdots \otimes x_{i_0} \otimes \cdots \otimes j^{\ast}(x_q) - x_1\otimes \cdots \otimes j_{\ast}(x_{i_0})\otimes \cdots \otimes x_q,
\]
where $j:\Spec(K_2) \to \Spec(K_1)$ is the canonical map. 

\end{definition}

This product gives a tensor product in the abelian category 
of Mackey functors with unit $\Z:k' \mapsto \Z$. 
We write $\{x_1,\ldots,x_q\}_{K/k}$ 
for the image of 
$x_1 \otimes \cdots \otimes x_q \in 
A_1(K) \otimes \cdots \otimes A_q(K)$ in the product 
$\left(A_1\otimesM \cdots \otimesM A_q\right)(k)$. 
For any field extension $K/k$, 
the canonical map $j=j_{K/k}:k\inj K$ induces  
the pull-back 
\[
  \Res_{K/k} := j^{\ast}: \left(A_1\otimesM \cdots \otimesM A_q\right)(k) \longrightarrow 
\left(A_1\otimesM \cdots \otimesM A_q\right)(K)
\] 
which is called the \textit{restriction map}. 
If the extension $K/k$ is finite, 
then the push-forward 
\[
  N_{K/k} := j_{\ast}: \left(A_1\otimesM \cdots \otimesM A_q\right)(K) \longrightarrow 
\left(A_1\otimesM \cdots \otimesM A_q\right) (k)
\] 
is given by 
$N_{K/k}(\{x_1,\ldots ,x_q\}_{L/K}) = \{x_1,\ldots, x_q\}_{L/k}$ 
on symbols and is called the \textit{norm map}. 
%
%
A smooth commutative algebraic group $G$ over $k$ 
forms a Mackey functor defined by $K \mapsto G(K)$.  
For a field extension $K_2/K_1$, 
the pull-back is the canonical map
given by $j:K_1 \inj K_2$ 
which is denoted by $j^{\ast}= \Res_{K_2/K_1}:G(K_1)\inj G(K_2)$. 
When the extension $K_2/K_1$ is finite, 
the push-forward is written as 
$j_{\ast} = N_{K_2/K_1}:G(K_2)\to G(K_1)$.



\begin{definition}
\label{def:Somekawa}
Let $G_1,\ldots , G_q$ be semi-abelian varieties over $k$. 
The \textit{Somekawa $K$-group} attached to $G_1,\ldots ,G_q$ 
is defined by 
\[
  K(k;G_1,\ldots ,G_q) := \left(G_1 \otimesM \cdots \otimesM G_q \right) (k)/R,
\]
where $R$ is the subgroup 
generated by the elements of the following form: 
 
\noindent
(WR) Let $k(C)$ be the function field of 
a projective smooth curve $C$ over $k$. 
For $g_i \in G_i(k(C))$ and $f\in k(C)^{\times}$, 
assume that for each closed point $P$ in $C$ there exists 
$i(P)$ ($1\le i(P) \le q$) 
such that $g_i \in G_i(\O_{C,P})$ for all $i\neq i(P)$. 
Then 
\[ 
  \sum_{P \in C_0}g_1(P)\otimes \cdots \otimes \dP(g_{i(P)}, f)\otimes \cdots \otimes g_{q}(P) \in R.
\]
Here $C_0$ is the set of closed points in $C$, 
$g_i(P) \in G_i(k(P))$ 
denotes the image of $g_i$ under the canonical map $G_i(\O_{C,P}) \to G_i(\kP)$  
 and 
$\dP:G_i(k(C)) \times {k(C)}^{\times} \to G_i(\kP)$ 
is the local symbol (\cite{Som90}). 
%
\end{definition}

%

For an isogeny $\phi:G \to H$ of semi-abelian varieties, 
the exact sequence $0 \to G[\phi] \to G(\kbar) \onto{\phi} H(\kbar)\to 0$ 
of Galois modules 
gives an injection of Mackey functors 
\[
  h^1: H/\phi   \to H^1(-, G[\phi]),
\]
where $H/\phi := \Coker(\phi)$ (in the category of Mackey functors) 
and $H^1(-,G[\phi])$ is also the Mackey functor given by 
$K \mapsto H^1(K, G[\phi])$. 
For isogenies $\phi_i :G_i\to H_i$ ($1\le i\le q$), 
the cup products and the norm map (=the corestriction) 
on the Galois cohomology groups give 
\begin{equation}
\label{Galois}
  h^q: \left(H_1/\phi_1 \otimesM \cdots \otimesM H_q/\phi_q\right) 
\xrightarrow{N_{-/k}\circ (h^1(-) \cup h^1(-) \cup \cdots  \cup h^1(-) )} 
H^q(-, G_1[\phi_1]\otimes \cdots \otimes G_q[\phi_q]).
\end{equation}
For any positive integer $m$ prime to the characteristic of $k$,  
we consider an isogeny
$m:G_i\to G_i$ induced from the multiplication by $m$. 
The Galois symbol map 
$h^q: G_1/m \otimesM \cdots \otimesM G_q/m  \to H^q(k,G_1[m]\otimes\cdots \otimes G_q[m])$ 
(\ref{Galois}) 
factors through 
$K(k;G_1, \ldots ,G_q)/m$  (\cite{Som90}, Prop.\ 1.5) 
and the induced homomorphism 
\[
  h^q_m :K(k;G_1,\ldots , G_q)/m \to H^q(k,G_1[m]\otimes \cdots \otimes G_q[m])
\]
is called the \textit{Galois symbol map}. 

\begin{conjecture}[Kato-Somekawa, \cite{Som90}]
  \label{conj:SK}
  Let $G_1,\ldots ,G_q$ be semi-abelian varieties over $k$. 
  For any positive integer $m$ 
  prime to the characteristic of $k$, 
  the Galois symbol map $h^q_m$ is injective.
\end{conjecture}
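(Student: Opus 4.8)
The plan is to reinterpret the Galois symbol $h^q_m$ as an \'etale realization map between two cohomology theories and then to invoke the Beilinson--Lichtenbaum comparison. The first step is to put the Somekawa group into a motivic framework: by the surface computation of Raskind--Spie\ss\ \cite{RS00} and its extension to arbitrary semi-abelian varieties (Kahn--Yamazaki), the group $K(k;G_1,\ldots,G_q)$ is isomorphic to a $\Hom$-group in Voevodsky's triangulated category of motives $\mathrm{DM}(k)$, of the shape $\Hom_{\mathrm{DM}}(\Z,\, M_1(G_1)\otimes\cdots\otimes M_1(G_q))$, where $M_1(G_i)$ denotes the weight-one motive of $G_i$. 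Reducing modulo $m$ exhibits $K(k;G_1,\ldots,G_q)/m$ as a motivic cohomology group with $\Z/m$-coefficients, the target $H^q(k,\,G_1[m]\otimes\cdots\otimes G_q[m])$ as the corresponding \'etale (Galois) cohomology of $\Spec k$, and $h^q_m$ as the canonical change-of-topology map from the Nisnevich-motivic to the \'etale theory. The problem is thereby reduced to: prove that this realization map is injective.

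First I would dispose of the toric case to calibrate the method. For $G_1=\cdots=G_q=\Gm$ one has $M_1(\Gm)=\Z(1)[1]$, so the tensor product is the Tate motive $\Z(q)[q]$ and the group in question is $H^q_{\mathrm{mot}}(k,\Z/m(q))=K^M_q(k)/m$, mapping to $H^q(k,\mu_m^{\otimes q})$. This is the edge case $a=b=q$ of Beilinson--Lichtenbaum, where by the Milnor--Bloch--Kato theorem \cite{Wei09} the realization map is even an isomorphism, recovering the known $\Gm$-case of the conjecture. The strategy for general $G_i$ is then to exploit the extension $0\to T_i\to G_i\to A_i\to 0$ of an abelian variety $A_i$ by a torus $T_i$: it induces a weight filtration on $M_1(G_i)$ with a Tate (Artin--Tate) graded piece $M_1(T_i)$ coming from the torus and a graded piece $h_1(A_i)$ coming from the abelian part. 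Expanding the $q$-fold tensor product into the resulting sum of twisted summands, every summand built only from toric and weight-zero pieces is a twist of $\Z/m(b)$, so after a norm/transfer argument it stays on the range $a\le b$ where Beilinson--Lichtenbaum forces injectivity.

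The hard part will be the contribution of the abelian summands $h_1(A_i)$, and this is where I expect the genuine obstacle to lie. Beilinson--Lichtenbaum is a statement about \emph{Tate} coefficients $\Z/m(b)=\mu_m^{\otimes b}$; the toric factors reduce exactly to this, but $h_1(A_i)$ is not a mixed Tate motive when $A_i$ is a non-CM abelian variety, so a tensor product involving abelian factors produces motivic cohomology with genuinely non-Tate coefficients, to which no Bloch--Kato-type comparison is presently available. One is then left to control the kernel of the \'etale realization on such non-Tate motivic cohomology --- equivalently, to rule out elements of $K(k;G_1,\ldots,G_q)$ that are divisible or torsion by virtue of the divisibility of $A_i(k)$ yet invisible in Galois cohomology. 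Supplying this vanishing in full generality is the crux: it is precisely the missing input, its delicacy is reflected in the restrictive reduction hypotheses of the partial results surrounding Theorem~\ref{thm:main}, and it is exactly the range in which the conjecture is most subtle and in which counterexamples are known to arise. A proof of the statement as worded would stand or fall on establishing this comparison for the non-Tate, abelian part.
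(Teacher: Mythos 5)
You were asked about Conjecture \ref{conj:SK}, and the first thing to say is that the paper contains no proof of it: it is a conjecture, and in fact a false one in the generality in which it is stated. As the paper itself records in Section \ref{sec:Somekawa}, item (b), Spie{\ss} and Yamazaki \cite{SY09} disproved the injectivity of $h^q_m$ for suitable (non-split) tori, so no argument can establish the statement as worded. What the paper actually proves are special cases with strong hypotheses: $k$ a $p$-adic field, the $G_i$ elliptic curves with $E_i[p^n]\subset E_i(k)$, and $E_1$ not supersingular (Theorem \ref{thm:main2}, Proposition \ref{prop:div}). Your proposal, which aims at the full conjecture, therefore cannot be completed --- and you half-acknowledge this in your final paragraph, but you locate the obstruction in the wrong place.

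Concretely, the step that fails is your treatment of the toric contributions. You assert that every summand built from toric and weight-zero pieces is a twist of $\Z/m(b)$, so that ``after a norm/transfer argument'' injectivity follows from Beilinson--Lichtenbaum. For a non-split torus $T$ this is wrong: $M_1(T)\otimes \Z/m$ is a sum of Tate twists only after base change to a splitting field $k'/k$, and the restriction--corestriction argument you invoke controls the kernel of $h^q_m$ only up to $[k':k]$-torsion; since every group in sight is $m$-torsion and $m$ may divide $[k':k]$, this yields nothing. The Spie{\ss}--Yamazaki counterexample lives precisely in this toric, Artin--Tate range --- not in the abelian part, where you place the crux. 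Indeed, over a $p$-adic field the abelian (elliptic) part is exactly what the paper can handle, and by entirely different means: it avoids $\mathrm{DM}(k)$ altogether, uses the explicit Kummer-theoretic description (\ref{eq:image}) of $\Im\bigl(E(k)/p \to H^1(k,E[p])\bigr)$ in terms of higher unit groups, and reduces injectivity of $h^2$ to a computation of the Mackey products $\Ubar^m\otimesM\Ubar^n$ of higher-unit Mackey functors (Lemma \ref{lem:key}), where the Hilbert symbol and local class field theory do the work. If you want your motivic framework to produce a theorem, the reduction hypotheses of Theorem \ref{thm:main} must be built in from the start; as a proof of Conjecture \ref{conj:SK} itself, the proposal cannot succeed.
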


The surjectivity of the Galois symbol map 
does not hold in general 
(For example, see (\ref{eq:image}) in Sect.\ \ref{sec:local}).
The above conjecture is studied in the following special semi-abelian varieties:

\begin{itemize}
  \setlength{\parskip}{0cm} 
  \setlength{\itemsep}{0cm} 
\item[(a)] \textit{Case where $G_1 = \cdots = G_q = \Gm$}:\ 
The conjecture and more strongly 
the bijection of the Galois symbol map are known for 
the multiplicative groups $G_1 = \cdots = G_q = \Gm$.  
by the Bloch-Kato conjecture (a theorem of Voevodsky, Rost and Weibel \cite{Wei09}). 
In fact, the group  
$K(k;\overbrace{\Gm,\ldots ,\Gm}^{q})$ 
coincides with the Milnor $K$-group 
$K_q^M(k)$ (\cite{Som90}, Thm.\ 1.4) 
and the map $h^q_m$ is the ordinary Galois symbol map. 

\item[(b)] \textit{Case where $G_1$ and $G_2$ are tori}: 
Yamazaki proved this conjecture 
for $G_1 = T$ a torus which admits a ``motivic interpretations'' 
(e.g., $k$ is a non-archimedean local field, \cite{Yam09}, Thm.\ 3.2) 
and $G_2 = \Gm$ 
(\cite{Yam09}, Prop.\ 2.11) 
and disproved it for general tori with M.~Spie\ss\ (\cite{SY09}, Prop.\ 7). 
Hence 
the above conjecture does not hold in general. 

\item [(c)] \textit{Case where $G_1 = \Gm$ and $G_2$ is a Jacobian variety}:\ 
It is known also (by Spie\ss, \cite{Yam05}, Appendix)
the conjecture holds 
for $G_1 = \Gm$ and $G_2 = J_X$ the Jacobian variety 
of a smooth projective geometrically connected curve $X$ over $k$ 
with $X(k)\neq \emptyset$. 

\item [(d)] \textit{Case where $k$ is a finite field}:\ 
If $k$ is a finite field, 
we have $K(k;G_1,\ldots , G_q) = 0$ for $q\ge 2$ 
(Kahn, \cite{Kahn92}). 
The conjecture holds in this case. 

\item [(e)] \textit{Case where $k$ is a $p$-adic field}:\ 
Let $A_1,\ldots, A_q$ be abelian varieties with split semi-ordinary reduction over $k$. 
If we assume that $A_1[m],\cdots ,A_q[m]$ are $k$-rational, 
then the conjecture holds (\cite{RS00}, Rem.\ 4.5.8 (b), see also \cite{Yam05}, Thm.\ 4.3). 
Note also that 
$K(k;A_1,\ldots ,A_q)$ is divisible for $q\ge 3$ (\cite{RS00}, Rem.\ 4.4.5). 

\end{itemize}

%
%
%

\section{Higher unit groups}
\label{sec:unit}
Let $k$ be a finite field extension  of $\Qp$ 
assuming $p\neq 2$. 
We denote by 
$v_k$ the normalized valuation, 
$\m_k$    the maximal ideal of the valuation ring $\O_k$, 
$\O_k^{\times} = U_k^0$  the group of units in $\O_k$ 
and $\F = \O_k/\m_k$ the (finite) residue field.  
In this section we study the Mackey product 
of the Mackey functors $\Ubar^m$ defined by 
the higher unit groups. 
The higher unit groups $U_k^m := 1 + \m_k^m$ 
induce the filtration $(\Ubar^m_k)_{m\ge 0}$ 
in $\ktp$ 
which is  
given by 
$\Ubark^m := \Im(U^m_k \to \ktp)$. 
The structure of the graded pieces of this filtration is known as follows. 

\begin{lemma}[\Cf \cite{Kaw02}, Lem.\ 2.1.3]
\label{lem:str}
Put $e_0 := e_0(k):= v_k(p)/(p-1)$. 
Assume $\mup := \Gm[p] \subset k$. 

\sn
$\mathrm{(a)}$ If $0 \le m < pe_0$, then 
\[
  \Ubark^m/\Ubark^{m+1} \simeq 
  \begin{cases}
  \F,& \mathrm{if}\ p\nmid m,\\
  1, & \mathrm{if}\ p\mid m.
  \end{cases}
\]

\sn
$\mathrm{(b)}$ If $m = pe_0$, then 
$\Ubark^{pe_0}/\Ubark^{pe_0+1} \simeq \Z/p$.

\sn
$\mathrm{(c)}$ If $m>pe_0$, then $\Ubark^m = 1$.
\end{lemma}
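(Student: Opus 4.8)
The plan is to compute each graded quotient $\Ubar_k^m/\Ubar_k^{m+1}$ by comparing the unit filtration $(U_k^m)$ with the subgroup of $p$-th powers, the graded pieces $U_k^m/U_k^{m+1}$ being identified with $\F$ (for $m\ge 1$) in the usual way. First I would record the reduction
\[
  \Ubar_k^m = \Im(U_k^m \to \ktp) \simeq U_k^m/\bigl(U_k^m \cap (\kt)^p\bigr),
\]
so that the natural surjection $U_k^m/U_k^{m+1}\surj\Ubar_k^m/\Ubar_k^{m+1}$ has kernel the image of $(\kt)^p\cap U_k^m$ in $U_k^m/U_k^{m+1}$. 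A valuation argument shows that for $m\ge 1$ any $p$-th power lying in $U_k^m$ already comes from $U_k^1$, since the Teichm\"uller part $\mu_{|\F|-1}$ has order prime to $p$; hence the kernel is precisely the image of $(U_k^1)^p\cap U_k^m$. The case $m=0$ is handled separately: $\Ubar_k^0/\Ubar_k^1$ is a quotient of $U_k^0/U_k^1\simeq\F^{\times}$, a group of order prime to $p$, hence trivial, which matches the asserted value for $p\mid 0$.

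The heart of the matter is the behaviour of the $p$-th power map on the filtration. Expanding $(1+a\pi^j)^p = 1 + pa\pi^j + \cdots + a^p\pi^{pj}$ for a uniformizer $\pi$ and comparing the valuations $v_k(p)+j=(p-1)e_0+j$ and $pj$ of the two extreme terms, the behaviour is governed by $j$ versus $e_0$: for $j<e_0$ the map sends $U_k^j/U_k^{j+1}$ to $U_k^{pj}/U_k^{pj+1}$ by the Frobenius $a\mapsto a^p$; for $j>e_0$ it sends $U_k^j/U_k^{j+1}$ to $U_k^{j+v_k(p)}/U_k^{j+v_k(p)+1}$ by $a\mapsto \bar c\,a$, where $\bar c:=\overline{p\pi^{-v_k(p)}}\in\F^{\times}$, an isomorphism; and for $j=e_0$ both extreme terms sit in level $pe_0=e_0+v_k(p)$, yielding the $\F_p$-linear Artin--Schreier map $\wp\colon\F\to\F$, $a\mapsto a^p+\bar c\,a$. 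For a fixed target level $m$ the valuation formulas show that at most one source level $j$ contributes, so the kernel computed above is exactly the image of the single corresponding graded map.

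With this dictionary the three cases fall out. For $0\le m<pe_0$ with $p\nmid m$, no $p$-th power of a unit reaches level $m$, so $\Ubar_k^m/\Ubar_k^{m+1}\simeq\F$; for $m=pj$ with $1\le j<e_0$, the Frobenius is surjective on the finite field $\F$, so the graded map fills the whole piece and $\Ubar_k^m/\Ubar_k^{m+1}=1$. This proves (a). For $m>pe_0$, the isomorphisms $a\mapsto\bar c\,a$ on all graded pieces of source level $>e_0$, together with the completeness of $\O_k$, show that the $p$-th power map $U_k^{e_0+1}\to U_k^{pe_0+1}$ is surjective; hence $U_k^m\subseteq U_k^{pe_0+1}\subseteq(\kt)^p$ and $\Ubar_k^m=1$, proving (c). For (b), only $j=e_0$ contributes to level $pe_0$, so $\Ubar_k^{pe_0}/\Ubar_k^{pe_0+1}\simeq\Coker(\wp)$.

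The main obstacle is pinning down this cokernel as $\Z/p$, and this is exactly where the hypothesis $\mup\subset k$ enters. I would use a primitive $p$-th root of unity $\zeta_p=1+u\pi^{e_0}$ with $u$ a unit, the shape being forced by $v_k(\zeta_p-1)=e_0$: expanding $\zeta_p^p=1$ and reading off level $pe_0$ gives $\wp(\bar u)=0$ with $\bar u\neq 0$, so $\Ker(\wp)$ is a nonzero $\F_p$-subspace of $\F$. Every nonzero $a\in\Ker(\wp)$ satisfies $a^{p-1}=-\bar c$, so these elements lie in a single coset of $\mu_{p-1}(\F)$ and number at most $p-1$; an $\F_p$-subspace with at most $p-1$ nonzero elements has order $1$ or $p$, hence $|\Ker(\wp)|=p$. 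Since $\wp$ is an endomorphism of the finite-dimensional $\F_p$-space $\F$, rank--nullity gives $\Coker(\wp)\simeq\Ker(\wp)$ of order $p$, whence $\Ubar_k^{pe_0}/\Ubar_k^{pe_0+1}\simeq\Z/p$, which is (b).
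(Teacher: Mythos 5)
Your proof is correct and complete. Note that the paper itself gives no argument for this lemma --- it is simply quoted from \cite{Kaw02}, Lem.\ 2.1.3 --- so there is no internal proof to compare against; what you have written is a self-contained derivation by the standard method underlying the cited result. You correctly reduce each quotient $\Ubark^m/\Ubark^{m+1}$ to the cokernel of the graded map induced by the $p$-th power map (Frobenius for source level $j<e_0$, multiplication by the unit $\bar{c}=\overline{p\pi^{-v_k(p)}}$ for $j>e_0$, and the Artin--Schreier-type map $\wp(a)=a^p+\bar{c}a$ at $j=e_0$), and the delicate points are all in place: the reduction of $(\kt)^p\cap U_k^m$ to $(U_k^1)^p\cap U_k^m$ via the Teichm\"uller decomposition, the uniqueness of the contributing source level, the completeness argument needed for (c) (which is genuinely more than the vanishing of the graded pieces), and the identification $\#\Ker(\wp)=p$ from $a^{p-1}=-\bar{c}$ together with the existence of the nonzero kernel element $\bar{u}$ coming from $\zeta_p=1+u\pi^{e_0}$ --- this is exactly where $\mup\subset k$ is used, as it must be. One cosmetic point: in the $m=0$ case, ``a quotient of a group of order prime to $p$, hence trivial'' is not by itself a valid inference; you should add that $\Ubark^0/\Ubark^1$ is also killed by $p$, being a subquotient of $\ktp$, so that a prime-to-$p$ quotient which is $p$-torsion must vanish.
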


We define a sub Mackey functor 
$\Ubar^m$ of $\Gm/p :=\Coker(p:\Gm\to \Gm)$ by 
$K \mapsto \Ubar_{K}^{me_{K/k}}$ 
for any positive integer $m$, 
where $e_{K/k}$ is the ramification index of 
the extension $K/k$. 
\begin{lemma}[\cite{RS00}, Lem.\ 4.2.1]
\label{lem:4.2.1}
For $i,j\ge 0$ with $i+j \ge 2$, the Galois symbol map induces 
\[
  (\Ubar^0)^{\otimesM i} \otimesM (\Gm/p)^{\otimesM j} \isomto 
  \begin{cases}
    H^2(-, \mu_p^{\otimes 2}),& \text{if $i+j = 2$}, \\
    0,& \text{otherwise}.
  \end{cases}
\]
\end{lemma}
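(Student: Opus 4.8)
\emph{Overall plan.} Because the Galois symbol is a morphism of Mackey functors, and because every finite extension $K/k$ is again a $p$-adic field containing $\mu_p$, it is enough to prove that each displayed map is an isomorphism on the value over every such $K$; I argue over the base, the same reasoning applying over each $K$. The strategy is to settle the case $i+j=2$ first and then to deduce $i+j\ge 3$ from it. Two inputs are used repeatedly: the isomorphism $h^2\colon(\Gm/p\otimesM\Gm/p)(k)\isomto H^2(k,\mu_p^{\otimes 2})\simeq\Z/p$ recalled above (Tate, Raskind--Spie\ss), and the short exact sequence of Mackey functors
\[
 0\longrightarrow\Ubar^0\longrightarrow\Gm/p\longrightarrow W\longrightarrow 0,
\]
where $W:=(\Gm/p)/\Ubar^0$ has $W(K)\simeq\Z/p$ generated by the class of a uniformizer, together with the right exactness of $\otimesM$.

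\emph{Case $i+j=2$.} The subcase $(i,j)=(0,2)$ is exactly $h^2$. For the other two I factor the symbol through the inclusion $\Ubar^0\inj\Gm/p$:
\[
 \Ubar^0\otimesM\Ubar^0\longrightarrow\Ubar^0\otimesM(\Gm/p)\longrightarrow(\Gm/p)\otimesM(\Gm/p)\isomto\Z/p.
\]
Surjectivity is the soft part. By right exactness the cokernel of the middle map is $W\otimesM(\Gm/p)$, and this vanishes because the image of $\Ubar^0\otimesM(\Gm/p)$ in $\Z/p$ already contains a generator: the cup product pairing $k^{\times}/p\times k^{\times}/p\to\Z/p$ is non-degenerate (local Tate duality) and $\Ubar^0(k)=\Okt/p\neq 0$, so a unit pairs non-trivially with a suitable element. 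Surjectivity of $\Ubar^0\otimesM\Ubar^0\to\Z/p$ is the assertion that some Hilbert symbol of two units is non-trivial, which holds since $\mu_p\subset k$ forces $e_0\ge 1$ and hence, by explicit reciprocity, a non-trivial pairing $U_k^a\times U_k^{\,pe_0-a}\to\mu_p$ with $p\nmid a$. The hard part is the matching upper bound, namely that $(\Ubar^0\otimesM\Ubar^0)(k)$ and $(\Ubar^0\otimesM(\Gm/p))(k)$ have order at most $p$; combined with the surjections just produced this forces all three products to be $\isomto\Z/p\simeq H^2(k,\mu_p^{\otimes 2})$. To obtain the bound I would filter the $\Ubar^0$ factors by the higher unit groups $\Ubar^m$ and pass to the associated graded, whose pieces are given by Lemma \ref{lem:str} ($\F$ for $0\le m<pe_0$, $p\nmid m$; trivial for $p\mid m$; $\Z/p$ for $m=pe_0$), and then compute the Mackey products of these graded pieces, showing via explicit reciprocity that exactly one pair of pieces survives into $H^2(k,\mu_p^{\otimes 2})$ and does so one-dimensionally.

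\emph{Case $i+j\ge 3$.} Granting the previous case, any two factors have Mackey product isomorphic, as a Mackey functor, to $H^2(-,\mu_p^{\otimes 2})$. Writing $q=i+j\ge 3$ and using associativity, the $q$-fold product becomes $H^2(-,\mu_p^{\otimes 2})\otimesM F\otimesM(\cdots)$ for some remaining factor $F\in\{\Ubar^0,\Gm/p\}$, so it suffices to prove
\[
 H^2(-,\mu_p^{\otimes 2})\otimesM(\Gm/p)=0\qquad\text{and}\qquad H^2(-,\mu_p^{\otimes 2})\otimesM\Ubar^0=0.
\]
Here $H^2(-,\mu_p^{\otimes 2})$ takes the value $\Z/p$ on each local field, with corestriction an isomorphism on invariants and restriction multiplication by the degree. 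A generator is of the form $\{\xi,a\}_{L/k}=N_{L/k}(\{\xi,a\}_{L/L})$ with $\xi\in H^2(L,\mu_p^{\otimes 2})$, so it is enough to kill $\{\xi,a\}_{L/L}$. If $a\in(L^{\times})^p$ it is zero; otherwise $L':=L(a^{1/p})$ is cyclic of degree $p$, so $\Res_{L'/L}(\xi)=0$ because $\mathrm{inv}_{L'}\circ\Res_{L'/L}=[L':L]\,\mathrm{inv}_L=0$, while $a=N_{L'/L}(a^{1/p})$ as $p\neq 2$. The projection formula then gives
\[
 \{\xi,a\}_{L/L}=\{\xi,N_{L'/L}(a^{1/p})\}_{L/L}=\{\Res_{L'/L}(\xi),a^{1/p}\}_{L'/L}=0.
\]
If the second factor is $\Ubar^0$ one takes $a$ a unit; then $a^{1/p}$ is again a unit and the same computation applies.

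\emph{Main obstacle.} The outer reductions are formal: the vanishing for $q\ge 3$ needs only an abstract isomorphism $F_1\otimesM F_2\cong H^2(-,\mu_p^{\otimes 2})$ plus the Kummer--norm trick above, and surjectivity in the case $i+j=2$ is a non-degeneracy statement. The genuine difficulty is the upper bound in the case $i+j=2$---the injectivity of the symbol on $\Ubar^0\otimesM\Ubar^0$ and $\Ubar^0\otimesM(\Gm/p)$. This is precisely where the arithmetic of $k$ enters, through the structure of the Hilbert pairing on the unit filtration (Lemma \ref{lem:str}) and explicit reciprocity, and it is not accessible by soft Mackey-functor manipulations alone.
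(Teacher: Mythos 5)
The paper gives no proof of this lemma at all --- it is imported verbatim from \cite{RS00}, Lem.\ 4.2.1 --- so the comparison is really with the argument there and with the paper's own proof of the closely analogous Lemma \ref{lem:key}. Your outer reductions are sound: the case $i+j\ge 3$ does follow from the case $i+j=2$ by associativity of $\otimesM$ together with the Kummer--norm trick (restriction to a degree-$p$ extension kills $H^2(L,\mu_p^{\otimes 2})$ while $a=N_{L'/L}(a^{1/p})$ for $p$ odd), and your surjectivity arguments for $i+j=2$ need only the non-degeneracy of the Hilbert symbol, i.e.\ Lemma \ref{lem:Hilbert}.

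The gap is exactly where you flag it, and it is the entire content of the lemma: the upper bounds $\#\bigl(\Ubar^0\otimesM\Ubar^0\bigr)(K)\le p$ and $\#\bigl(\Ubar^0\otimesM\Gm/p\bigr)(K)\le p$. The method you propose --- filter $\Ubar^0$ by the $\Ubar^m$ and compute Mackey products of the graded pieces of Lemma \ref{lem:str} --- cannot work as stated, because an element of $\bigl(\Ubar^0\otimesM\Ubar^0\bigr)(K)$ is a sum of symbols $\{a,b\}_{L/K}$ with $L$ ranging over \emph{all} finite extensions of $K$; a filtration of the groups $\Ubar_K^m$ over the fixed field $K$ only controls the subgroup $S(K)$ generated by symbols $\{a,b\}_{K/K}$. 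Even bounding $S(K)$ requires more than explicit reciprocity: one must first show that a symbol with trivial Hilbert pairing already vanishes in the Mackey product (the norm argument of Lemma \ref{lem:Step1}), then identify $S(K)\simeq\Z/p$ via the Artin--Schreier computation with $\phi(x)=\{1+\wt{x}\pi^{pe_0(K)-n},1+\pi^n\}_{K/K}$. After that one still has to prove $S(K)=\bigl(\Ubar^0\otimesM\Ubar^0\bigr)(K)$, which in the proof of Lemma \ref{lem:key}(ii) is a separate induction on the $p$-part of $e_{L/K}$ using norm surjectivity in tame extensions and, in the wild case, auxiliary Kummer extensions chosen to have a different ramification break. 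None of this is recoverable from the graded pieces alone, so as written your argument establishes only a surjection $(\Ubar^0)^{\otimesM i}\otimesM(\Gm/p)^{\otimesM j}\surj H^2(-,\mu_p^{\otimes 2})$ for $i+j=2$; since your treatment of $i+j\ge 3$ invokes the isomorphism for a pair of factors, the missing injectivity propagates to that case as well.
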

For each positive integers $m$ and $n$, 
now we define a map $h^{m,n}$
by the composition
\begin{align*}
\label{eq:hmn}
  h^{m,n}:\Ubar^m\otimesM \Ubar^n \to \Gm/p\otimesM \Gm/p\onto{h^2} H^2(-,\mup^{\otimes 2}). 
\end{align*}
Here, the latter $h^2$ is the Galois symbol map on $\Gm/p\otimesM \Gm/p$  
and is bijective (Lem.\ \ref{lem:4.2.1}). 
We also denote by 
\[
h^{-1,m}: \Gm/p \otimesM \Ubar^m \to \Gm/p\otimesM \Gm/p \onto{h^2} H^2(-,\mup^{\otimes 2})
\]
by convention. 
We determine the structure of the Mackey 
product of these Mackey functors $\Ubar^m$ as follows.

\begin{lemma}
\label{lem:key}
Let $k$ be a $p$-adic field which contains 
$\mup$. 
Put $e_0 := e_0(k) := v_k(p)/(p-1)$. 

\sn
$\mathrm{(i)}$ 
For any positive integer $m$, 
the map $h^{-1,m}$ induces an isomorphism 
\[
  \Gm/p\otimesM \Ubar^m \isomto 
  \begin{cases}
  H^2(-,\mu_p^{\otimes 2}), & \mbox{if}\ m\le pe_0,\\
  0, & \mbox{otherwise}. 
  \end{cases}    
\]

\sn
$\mathrm{(ii)}$  
For any positive integer $m$, 
the map $h^{0,m}$ induces an isomorphism 
\[
\Ubar^{0} \otimesM \Ubar^{m} 
\isomto  
  \begin{cases}
  H^2(-,\mu_p^{\otimes 2}), & \mbox{if}\ m < pe_0,\\
  0, & \mbox{otherwise}. 
  \end{cases}    
\]

\sn
$\mathrm{(iii)}$ 
The map $h^{pe_0,m}$ induces 
\[
  \Ubar^{pe_0} \otimesM \Ubar^{m} \isomto 0.
\] 
\end{lemma}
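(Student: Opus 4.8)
The plan is to treat all three parts by a single dévissage, using that $\otimesM$ is the tensor product in the abelian category of Mackey functors and is therefore right exact. The two anchoring inputs are Lemma \ref{lem:4.2.1} (bijectivity of $h^2$ on $\mathbb{G}_m/p \otimesM \mathbb{G}_m/p$, $\overline{U}^0 \otimesM \mathbb{G}_m/p$ and $\overline{U}^0 \otimesM \overline{U}^0$) and the perfectness of the cup-product (Hilbert symbol) pairing $k^{\times}/p \times k^{\times}/p \to H^2(k,\mu_p^{\otimes 2}) \simeq \mathbb{Z}/p$, whose filtration behaviour is governed by Lemma \ref{lem:str}. First I would dispose of the trivial vanishing: for $m > pe_0$ one has $me_{K/k} > pe_0(K)$ for every finite $K/k$, so $\overline{U}^m(K) = \overline{U}_K^{me_{K/k}} = 1$ by Lemma \ref{lem:str}(c); hence $\overline{U}^m = 0$ as a Mackey functor and every product with such a factor vanishes, which settles (i) and (ii) for $m > pe_0$.

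For surjectivity in the isomorphism cases I would exhibit explicit symbols over $k$. The top graded piece $\overline{U}^{pe_0}_k \simeq \mathbb{Z}/p$ (Lemma \ref{lem:str}(b)) is the orthogonal complement of $\overline{U}^0 = \overline{U}^1$ inside $k^{\times}/p$, since $(\overline{U}^{pe_0},\overline{U}^1) = 0$ for reasons of filtration degree; it is therefore the unique piece pairing nontrivially with the uniformizer class $[\pi_k]$. Thus for $m \le pe_0$ the single symbol $\{[\pi_k], y\}$ with $y$ a generator of $\overline{U}^{pe_0}_k \subseteq \overline{U}^m_k$ maps to a generator of $H^2$, giving surjectivity of $h^{-1,m}$ in (i). For (ii) with $m < pe_0$ I would instead use the perfect duality $\mathrm{gr}^i \times \mathrm{gr}^{pe_0-i} \to \mu_p$ of complementary graded pieces: choosing $y \in \overline{U}^m_k$ representing a nonzero class in some $\mathrm{gr}^j$ with $m \le j < pe_0$ (possible since $p \nmid j$ for the first such $j$) and its dual partner $u \in \overline{U}^{pe_0-j}_k \subseteq \overline{U}^0_k$ produces $\{u,y\}$ mapping to a generator, so $h^{0,m}$ is onto.

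The heart of the argument, and the main obstacle, is the dévissage along the filtration of the second factor. From the short exact sequences $0 \to \overline{U}^{j+1} \to \overline{U}^j \to \mathrm{gr}^j \to 0$ and right exactness, the products $\overline{U}^a \otimesM \overline{U}^m$ (with $\overline{U}^{-1} := \mathbb{G}_m/p$) are controlled by the graded products $\overline{U}^a \otimesM \mathrm{gr}^j$. By Lemma \ref{lem:str} the middle pieces $\mathrm{gr}^j$ ($p \nmid j$, $j < pe_0$) are of additive type — isomorphic via $1 + x \mapsto \bar x$ to the additive group of the finite residue field — while $\mathrm{gr}^{pe_0} \simeq \mathbb{Z}/p$. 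The crucial claims are that $\overline{U}^{pe_0} \otimesM \mathrm{gr}^j = 0$ for every $j$: for the additive middle pieces I expect to identify the graded product with a Somekawa-type product living over the finite residue field $\mathbb{F}$ and to invoke its vanishing (the mechanism behind item (d) of the introduction, Kahn's theorem), and for the top-with-top piece $\overline{U}^{pe_0} \otimesM \overline{U}^{pe_0}$ to use that generating symbols $\{z,z\}$ equal $\{z,-1\} = 0$ since $-1 = (-1)^p$ is a $p$-th power ($p$ odd). Making these reductions precise — in particular lifting the symbol-level vanishings to genuine vanishing in the Mackey product, and controlling the comparison maps between successive $\overline{U}^{j}$ well enough that right exactness alone suffices — is the step I expect to be most delicate.

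Assembling the pieces then gives all three statements. In the isomorphism cases the dévissage shows $\overline{U}^a \otimesM \overline{U}^m$ is generated by a cyclic image of order $\le p$, which the surjectivity of the previous paragraph upgrades to the isomorphism onto $H^2 \simeq \mathbb{Z}/p$; this yields (i) for $m \le pe_0$ and (ii) for $m < pe_0$. For the vanishing statement (iii), the same dévissage reduces $\overline{U}^{pe_0} \otimesM \overline{U}^m$ to the graded products $\overline{U}^{pe_0} \otimesM \mathrm{gr}^j$ treated above, all of which vanish. The conceptual point separating (iii) from the non-vanishing case $m = pe_0$ of (i) is exactly that in (iii) both factors lie in $\overline{U}^0$, where the top piece is orthogonal to everything, whereas in (i) the first factor is the full $\mathbb{G}_m/p$ and contains the uniformizer direction dual to $\overline{U}^{pe_0}$. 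Finally I note that (ii) for $m \ge pe_0$ is itself a consequence of (iii) via the symmetry of $\otimesM$ and the trivial vanishing, so (iii) should be established first.
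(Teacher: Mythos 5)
Your strategy diverges from the paper's (which works directly with symbols rather than with a filtration d\'evissage), and the places you yourself flag as ``delicate'' are exactly where the proof lives; as written there are three genuine gaps. First, the identity $\{z,z\}=\{z,-1\}$ is a Steinberg-type relation of Milnor $K$-theory and is \emph{not} available in a Mackey product: the only relation imposed in Definition \ref{def:Mackey} is the projection formula (PF), so your treatment of $\Ubar^{pe_0}\otimesM\Ubar^{pe_0}$ fails (and in any case the product is generated by symbols $\{a,b\}_{L/K}$ with $a\neq b$ over varying $L$, not just diagonal ones). The mechanism the paper uses for all of (iii) is different and essential: for $a\in\Ubar_K^{pe_0}$ nontrivial, $L=K(\sqrt[p]{a})$ is \emph{unramified} of degree $p$, the norm $N_{L/K}$ is surjective on higher unit groups for unramified extensions, and (PF) then moves the symbol to $L$ where $a$ dies. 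Second, your proposed vanishing of $\Ubar^{pe_0}\otimesM\mathrm{gr}^j$ via ``a Somekawa-type product over the residue field'' and Kahn's theorem is unjustified: $\mathrm{gr}^j$ is a Mackey functor over the $p$-adic field $k$ whose values happen to be $\F_K$, but its norm maps for \emph{ramified} extensions are governed by the ramification filtration (roughly $x\mapsto x^p$ plus trace terms), not by the residue-field trace, so it is not pulled back from a functor over $\F$ and Kahn's result does not apply.

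Third, and most seriously for the isomorphism statements (i) and (ii): right exactness of $\otimesM$ applied to $0\to\Ubar^{j+1}\to\Ubar^j\to\mathrm{gr}^j\to 0$ only expresses $\Ubar^a\otimesM\Ubar^j$ as an extension of $\Ubar^a\otimesM\mathrm{gr}^j$ by the (unknown) image of $\Ubar^a\otimesM\Ubar^{j+1}$. Since $\Ubar^0\otimesM\mathrm{gr}^j$ is nonzero for a whole range of $j<pe_0$ (each such piece pairs perfectly with a complementary piece of $\Ubar^0$ under the Hilbert symbol), the d\'evissage gives an upper bound on the order of $\Ubar^0\otimesM\Ubar^m$ that is far larger than $p$; to get down to $\Z/p$ you must show that classes coming from different filtration steps coincide in the Mackey product, i.e.\ that the transition maps are surjective. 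That is precisely the content of the paper's explicit computation: the map $\phi(x)=\{1+\wt{x}\pi^{pe_0(K)-n},1+\pi^n\}_{K/K}$, the Hilbert-symbol identity reducing it to $-n(1+\wt{x}\pi^{pe_0(K)},\pi)_p$, the Artin--Schreier map $\sigma$ giving $\F_K/\sigma(\F_K)\simeq\Z/p$, the single-symbol injectivity criterion of Lemma \ref{lem:Step1} (norm from $K(\sqrt[p]{b})$ plus (PF)), and the reduction of an arbitrary symbol $\{a,b\}_{L/K}$ to a diagonal symbol over $K$ by induction on $v_p(e_{L/K})$ using auxiliary totally ramified Kummer extensions with distinct ramification breaks. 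None of these steps is supplied or replaceable by the right-exactness formalism, so the proposal does not yet constitute a proof.
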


The rest of this section is devoted to show this lemma. 
For any finite extension $K/k$, 
the cup product 
$\cup: H^1(K,\mup)\otimes H^1(K,\mup) \to H^2(K, \mup^{\otimes 2})$ 
on the Galois cohomology groups 
is characterized by the Hilbert symbol 
$(\ ,\ )_p :\Ktp \otimes \Ktp \to \mup$ 
as in the following commutative diagram 
(\Cf \cite{Ser68}, Chap.\ XIV):
\begin{equation}
\label{eq:Hilbert}
\vcenter{
  \xymatrix{
     H^1(K ,\mup)\otimes H^1(K, \mup) \ar[r]^-{\cup} & H^2(K, \mup^{\otimes 2})\ar[d]^{\simeq} \\
    \Ktp\otimes \Ktp \ar[u]^{\simeq} \ar[r]^-{(\ ,\ )_p} & \mup&.
  }
}
\end{equation}
The order of the image in $H^2(K,\mup^{\otimes 2}) \simeq \mup \simeq \Z/p$ 
by the Hilbert symbol are calculated by local class field theory (\Cf \cite{HH13}, Lem.\ 3.1): 

\begin{lemma}
\label{lem:Hilbert}
Let $m$ and $n$ be positive integers.

\sn 
$\mathrm{(i)}$     
\[
  \# (\Ktp, \UbarK^m)_p = 
  \begin{cases}
    p, & \mathit{if}\ m \le pe_0(K),\\
    0, & \mathit{otherwise}.
  \end{cases}
\]

\sn 
$\mathrm{(ii)}$ If $p\nmid m$ or $p\nmid n$, then 
\[
  \# (\UbarK^m, \UbarK^n)_p = 
  \begin{cases}
    p, & \mathit{if}\ m+n \le pe_0(K),\\
    0, & \mathit{otherwise}.
  \end{cases}
\]

\sn
$\mathrm{(iii)}$ If $p\mid m$ and $p\mid n$, 
then 
\[
  \# (\UbarK^m, \UbarK^n)_p = 
  \begin{cases}
    p, & \mathit{if}\ m+n < pe_0(K),\\
    0, & \mathit{otherwise}.
  \end{cases}
\]
\end{lemma}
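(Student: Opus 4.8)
The plan is to derive all three orders from a single structural input — the nondegeneracy of the Hilbert symbol on the finite group $\Ktp$, compatible with the cup product via diagram (\ref{eq:Hilbert}) — fed with the graded data of Lemma~\ref{lem:str} and one genuinely local computation, the conductor of the degree-$p$ Kummer extensions cut out by the higher units. I would dispose of part (i) first, as it needs no calculation: since $(\ ,\ )_p$ is a nondegenerate pairing on $\Ktp$ (perfectness of the norm-residue symbol), for any subgroup $V\subseteq\Ktp$ the image $(\Ktp,V)_p$ is nontrivial exactly when $V\neq 0$. By Lemma~\ref{lem:str} the subgroup $\UbarK^m$ contains $\UbarK^{pe_0(K)}\cong\Z/p$ and is therefore nonzero precisely when $m\le pe_0(K)$, which is (i).

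For part (ii) I would pass to the norm-residue description: for $b\in\Ktp$ with $b\notin(K^\times)^p$ and $L=K(b^{1/p})$ one has $(a,b)_p=1$ iff $a\in N_{L/K}(L^\times)$, so that $U_K^m\subseteq N_{L/K}(L^\times)$ iff $m\ge\mathfrak{f}(L/K)$, the conductor exponent. By symmetry of the Hilbert symbol I may assume $p\nmid n$, so that the graded piece at level $n$ is nonzero and I can choose $b$ of exact level $n$. The crux is then the classical identity $\mathfrak{f}(K(b^{1/p})/K)=pe_0(K)-n+1$ for such $b$ with $1\le n<pe_0(K)$ (the value $n=pe_0(K)$ giving an unramified, hence conductor-zero, extension, and $n>pe_0(K)$ giving $\UbarK^n=0$). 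Granting this, $(\UbarK^m,\UbarK^n)_p$ is nontrivial iff some $b$ of exact level $n$ satisfies $m<\mathfrak{f}=pe_0(K)-n+1$, i.e. $m+n\le pe_0(K)$; conversely every element of $\UbarK^n$ has exact level $\ge n$ and so conductor $\le pe_0(K)-n+1$, forcing all symbols trivial once $m+n>pe_0(K)$. This is exactly (ii).

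Part (iii) I would then reduce to (ii). When $p\mid m$ and $p\mid n$, Lemma~\ref{lem:str}(a) gives the equalities $\UbarK^m=\UbarK^{m+1}$ and $\UbarK^n=\UbarK^{n+1}$ inside $\Ktp$, whence $(\UbarK^m,\UbarK^n)_p=(\UbarK^{m+1},\UbarK^{n+1})_p$. Since $p\nmid(m+1)$, case (ii) applies and the symbol is nontrivial iff $(m+1)+(n+1)\le pe_0(K)$. As $p$ divides both $m+n$ and $pe_0(K)$, the bound $m+n\le pe_0(K)-2$ is equivalent to $m+n<pe_0(K)$, which is (iii).

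I expect the main obstacle to be precisely the conductor computation $\mathfrak{f}(K(b^{1/p})/K)=pe_0(K)-n+1$. This is the sole point where the arithmetic of $K$ enters, and it rests on the explicit ramification theory of degree-$p$ Kummer extensions: reading off the break of the ramification filtration of $L/K$ from $n=v_K(b-1)$, with the constant $pe_0(K)=p\,v_K(p)/(p-1)$ marking the level at which raising to the $p$-th power ceases to deepen the unit filtration. Equivalently, one may package the same local input as the assertion that $(\ ,\ )_p$ induces a perfect pairing $\mathrm{gr}^{a}\UbarK\times\mathrm{gr}^{pe_0(K)-a}\UbarK\to\mup$ and vanishes on $U_K^a\times U_K^b$ for $a+b>pe_0(K)$; either formulation leans on this one computation, which is where I would concentrate the effort.
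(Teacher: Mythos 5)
Your proposal is, in substance, a correct reconstruction of a proof that the paper never actually gives: here the lemma is simply quoted from \cite{HH13}, Lem.~3.1, with the remark that the orders are ``calculated by local class field theory,'' so any self-contained argument is necessarily your own route. Parts (i) and (ii) are sound. Part (i) needs only nondegeneracy of the Hilbert symbol on $\Ktp$ plus the fact (Lemma~\ref{lem:str}) that $\UbarK^m\neq 1$ exactly when $m\le pe_0(K)$. Part (ii) correctly combines the norm-group characterization $(a,b)_p=1\iff a\in N_{L/K}(L^\times)$ for $L=K(\sqrt[p]{b})$ with the conductor formula $\mathfrak{f}(L/K)=pe_0(K)-n+1$ for $b$ of exact level $n$, $p\nmid n$, $n<pe_0(K)$; that formula is precisely the ramification input (\cite{Kaw02}, Lem.~2.1.5) which this paper itself invokes in the proof of Lemma~\ref{lem:key}, so treating it as the one classical pillar matches the paper's own level of detail. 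In the vanishing direction of (ii), note that you must allow the class of $b$ to have arbitrary exact level $n'\ge n$; the levels that occur are $n'<pe_0(K)$ with $p\nmid n'$ (conductor $pe_0(K)-n'+1\le pe_0(K)-n+1\le m$ once $m+n>pe_0(K)$) or $n'=pe_0(K)$ (unramified, conductor $0$), and your parenthetical does cover both.

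Part (iii), however, contains a genuine though repairable gap. Lemma~\ref{lem:str}(a) yields $\UbarK^m=\UbarK^{m+1}$ only in the range $0\le m<pe_0(K)$; at $m=pe_0(K)$ it is false, since by Lemma~\ref{lem:str}(b) one has $\UbarK^{pe_0(K)}/\UbarK^{pe_0(K)+1}\simeq\Z/p$ while $\UbarK^{pe_0(K)+1}=1$. Because $p\mid pe_0(K)$, the case $m=pe_0(K)$ (or $n=pe_0(K)$) does fall under the hypothesis of (iii), and there your shift-by-one reduction to (ii) rests on a false equality. The conclusion is still true, and the repair is exactly the observation you already made in (ii): a nonzero class of exact level $pe_0(K)$ is represented by a unit whose $p$-th root generates the \emph{unramified} degree-$p$ extension (\cite{Kaw02}, Lem.~2.1.5), and all units are norms from an unramified extension, so its pairing with $\UbarK^n\subseteq\UbarK^0$ vanishes --- consistent with (iii), since then $m+n\ge pe_0(K)+p>pe_0(K)$. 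So (iii) should be split: for $m,n<pe_0(K)$ your reduction and the parity argument ($m+n\le pe_0(K)-2\iff m+n<pe_0(K)$ for multiples of $p$) are correct; for $\max(m,n)\ge pe_0(K)$ argue via the unramified case directly, the subcase $\max(m,n)>pe_0(K)$ being trivial because that filtration step is already $1$.
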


From the lemma above, 
the image of the product does not depend on an 
extension $K/k$, 
hence we obtain the images as required in Lemma \ref{lem:key}.

\begin{lemma}
\label{lem:Step1}
For any symbol $\{a,b\}_{K/K}$ 
in $\left(\Ubar^0\otimesM \Ubar^m\right)(K)$, 
if we assume $h^{0,m}(\{a,b\}_{K/K}) = 0$ then  $\{a,b\}_{K/K} = 0$. 
\end{lemma}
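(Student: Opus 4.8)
The plan is to read the vanishing of $h^{0,m}$ as a norm condition coming from local class field theory, and then to kill the symbol using the projection formula (PF).

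First I would dispose of the degenerate case. If $b$ lies in $(K^\times)^p$, then $b$ is already trivial in $K^\times/p$, hence in $\Ubar^m(K)$, so $\{a,b\}_{K/K} = 0$ with nothing to prove. Thus I may assume $b \notin (K^\times)^p$; since $\mup \subset K$, the extension $L := K(\sqrt[p]{b})$ is then cyclic of degree $p$ over $K$. Unwinding the definition of $h^{0,m}$ through the diagram (\ref{eq:Hilbert}), the hypothesis $h^{0,m}(\{a,b\}_{K/K}) = 0$ says exactly that the Hilbert symbol $(a,b)_p$ is trivial, which by local class field theory is equivalent to $a \in N_{L/K}(L^\times)$. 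Write $a = N_{L/K}(c)$ with $c \in L^\times$.

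Next I would check that $c$ may be taken to be a unit and that the restriction of $b$ stays inside the subfunctor $\Ubar^m$. Since $a$ represents a class in $\Ubar^0(K)$ we have $v_K(a) = 0$, and the formula $v_K \circ N_{L/K} = f_{L/K}\cdot v_L$ forces $v_L(c) = 0$; thus $c \in \O_L^\times$ and its class lies in $\Ubar^0(L)$. Moreover, from $\m_K\O_L = \m_L^{e_{L/K}}$ together with $e_{L/k} = e_{L/K}\,e_{K/k}$, one sees $\Res_{L/K}(b) \in \Ubar^m(L)$, so that $\{c, \Res_{L/K}(b)\}_{L/K}$ is a legitimate element of $\left(\Ubar^0 \otimesM \Ubar^m\right)(L)$. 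Applying the projection formula (PF) to the extension $L/K$ then gives
\[
  \{a,b\}_{K/K} = \{N_{L/K}(c), b\}_{K/K} = \{c, \Res_{L/K}(b)\}_{L/K}.
\]
Finally, $b = (\sqrt[p]{b})^p$ is a $p$-th power in $L$, so $\Res_{L/K}(b) = 0$ in $L^\times/p$ and hence in $\Ubar^m(L)$; the right-hand symbol therefore vanishes, and $\{a,b\}_{K/K} = 0$.

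The hard part is the bookkeeping in the second step: one must verify both that the norm preimage $c$ can be chosen in $\O_L^\times$ and that $\Res_{L/K}(b)$ remains in $\Ubar^m(L)$, so that the projection formula is applied inside $\Ubar^0 \otimesM \Ubar^m$ and not merely in the ambient product $\Gm/p \otimesM \Gm/p$. The translation ``$(a,b)_p$ trivial $\Leftrightarrow a \in N_{L/K}(L^\times)$'' is the only input from class field theory, and it is precisely the mechanism underlying Lemma \ref{lem:Hilbert}.
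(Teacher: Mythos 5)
Your proof is correct and follows the same route as the paper: interpret the vanishing of $h^{0,m}$ via the Hilbert symbol as the statement that $a$ is a norm from $L=K(\sqrt[p]{b})$, lift $a$ to a unit of $L$, and apply the projection formula (PF), whereupon $\Res_{L/K}(b)$ dies in $L^\times/p$. The extra bookkeeping you supply (the degenerate case $b\in(K^\times)^p$, the valuation argument showing the norm preimage is a unit, and the check that $\Res_{L/K}(b)$ stays in $\Ubar^m(L)$) is exactly what the paper leaves implicit by citing \cite{FV02}, Chap.\ IV, Prop.\ 5.1 for the norm statement on unit groups.
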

\begin{proof}
The symbol map is written by the Hilbert symbol 
$h^{0,m}(\{a,b\}_{K/K}) = (a, b)_p$ as in (\ref{eq:Hilbert}) and thus 
$a$ is in the image of the norm 
$N_{L/K}:\Ubar_{L}^0 \to \UbarK^0$ 
for $L = K(\sqrt[p]{b})$ 
(\cite{FV02}, Chap.\ IV, Prop.\ 5.1). 
Take  $\wt{a} \in \Ubar_L^0$ 
such that $N_{L/K}(\wt{a}) = a$. 
We obtain
\[
  \{a,b\}_{K/K} = \{N_{L/K}\wt{a}, b\}_{K/K}  
 = \{\wt{a}, \Res_{L/K}(b)\}_{L/K} = 0 
\]  
by the condition (PF) 
in the definition of the Mackey product  
(Def.\ \ref{def:Mackey}).  
\end{proof}

\begin{proof}[Proof of Lem.\ \ref{lem:key}]
(iii) For any symbol $\{a,b\}_{L/K}$ in $\left(\Ubar^{pe_0} \otimesM \Ubar^m \right) (K)$, 
we have $N_{L/K}(\{a,b\}_{L/L}) = \{a,b\}_{L/K}$. 
Thus it is enough to show $\{a,b\}_{K/K}  = 0$ 
with $a\neq 1$. 
Put $e = e_{K/k}$. 
Since the extension $L=K(\sqrt[p]{a})$ is unramified of degree $p$ 
(\cite{Kaw02}, Lem.\ 2.1.5), 
the norm map $N_{L/K}:\Ubar_{L}^{me}\to \Ubar_{K}^{me}$ is surjective 
(\cite{Ser68}, Chap.\ V, Sect.\ 2, Prop.\ 3). 
By the projection formula (PF), 
\[
  \{a,b\}_{K/K} = \{a, N_{L/K}\wt{b}\}_{K/K}  
 = \{\Res_{L/K}(a), \wt{b}\}_{L/L} = 0 
\]  
for some $\wt{b}\in \Ubar_{L}^{me}$.

\sn
(i) The assertion (i) is proved by similar arguments as in (ii) below.

\sn
(ii) 
For $m\ge pe_0$, 
the assertion follows from Lemma \ref{lem:Hilbert} and Lemma \ref{lem:Step1}. 
Assume $m<pe_0$. 
For any finite extension $K/k$ with ramification index $e$, 
we denote by $S(K)$ the subgroup of $\left(\Ubar^0\otimesM \Ubar^{m}\right)(K)$ 
generated by symbols of the form 
$\{a,b\}_{K/K}$ 
($a\in \UbarK^0 = \UbarK^1, b\in \Ubar^{m}(K) = \UbarK^{me}$). 
Put 
\[
  n = 
  \begin{cases}
  me+1,& \mbox{if $p\mid me$},\\
  me,& \mbox{if $p\nmid me$}. 
  \end{cases}
\]
We also denote by $T(K) \subset S(K)$ 
the subgroup generated by $\{a,b\}_{K/K}$ 
with $a\in \Ubar^{pe_0(K)-n}_K$. 
Fix a uniformizer $\pi$ of $K$ and 
let $\F_K= \O_K/\pi\O_K$ be the residue field of $K$.  
Define  
$\phi:\F_K \to T(K)$ 
by 
$x \mapsto \{1+ \wt{x}\pi^{pe_0(K)-n},  1 + \pi^{n}\}_{K/K}$, 
where $\wt{x} \in \O_{K}$ is a lift of $x$. 
Lemma \ref{lem:Hilbert} and Lemma \ref{lem:Step1} 
imply $\{1+ a\pi^{pe_0(K)-n+1},  1 + \pi^{n}\}_{K/K} = 0$ 
for $a\in \O_K$. 
Thus the map $\phi$ does not depend on the choice of $\wt{x}$. 
By calculations of symbols (\Cf \cite{BK86}, Lem.\ 4.1), 
we have 
\begin{align*}
  h^{0,m}(\phi(x)) 
  & = (1 + \wt{x}\pi^{pe_0(K)-n}, 1+ \pi^{n})_p \\
  &= (1 + \wt{x}\pi^{pe_0(K)-n}, 1+ (1 + \wt{x}\pi^{pe_0(K)-n}) \pi^{n})_p\\
  &= -(1 + \wt{x}\pi^{pe_0(K)}, -\wt{x} \pi^{n})_p\\
  &= -n(1+\wt{x}\pi^{pe_0(K)}, \pi)_p.
\end{align*}
The map $\phi$ is non-zero, 
since its image by $h^{0,m}$ 
generates $H^2(K,\mup^{\otimes 2}) \simeq \Z/p$. (\cite{Milnor}, Cor.\ A.12, see also \cite{BK86}).
Define the group homomorphism 
$\sigma:\F_K\to \F_K$ by $x\mapsto x^p + ax$, 
where $a$ is the class in $\F_K$ represented by $p\pi^{-v_K(p)}$. 
From the equality (\Cf \cite{BK86}, Lem.\ 5.1)
\[
  (1+\wt{x} \pi^{e_0(K)})^p \equiv 1 + (\wt{x}^p + \wt{x}p\pi^{-v_K(p)}\pi^{pe_0(K)} \mod \pi^{pe_0(K) +1}, 
\]
we obtain
\begin{align*}
  h^{0,m}(\phi(x^p + ax)) 
  &= -n(1 + (\wt{x}^p+ p\pi^{-v_K(p)}\wt{x})\pi^{pe_0(K)}, \pi)_p\\
  &= -n( (1 + \wt{x}\pi^{e_0(K)})^p, \pi^{n})_p\\
  & = 0. 
\end{align*}
By Lemma \ref{lem:Step1},  
the map $\phi$ factors through $\F_K/\sigma(\F_K)$. 
On the other hand, 
the map $\sigma$ is extended to $\sigma:\ol{\F}_K\to \ol{\F}_K$ and we have $H^1(\F_K, \ol{\F}_K) = 1$. 
Since $\Ker(\sigma) \simeq \Z/p$ as Galois modules, 
we obtain 
$\F_K/\sigma(\F_K) \simeq H^1(\F_K, \Ker(\sigma)) \simeq \Z/p$. 
Now we have the following commutative diagram 
\[
  \xymatrix@C=15mm{
    \F_K/ \sigma (\F_K) \ar[rd]_{\simeq} \ar@{^{(}->}[r]^{\phi} & T(K)\ar@{->>}[d]^{h^{0,m}} \\
          & H^2(K,\mup^{\otimes 2}).
  }
\]
Therefore, $T(K) = \Im(\phi) \oplus \Ker(h^{0,m})$. 
We show that for any element $x = \sum_{i=1}^n \{a_i,b_i\}_{K/K} \in T(K)$, 
if $h^{0,m}(x) = 0$, then $x = 0$ by induction on $n$. 
For any symbol $\{a,b\}_{K/K}$ in $T(K)$ with $h^{0,m}(\{a,b\}_{K/K}) = 0$. 
Lemma \ref{lem:Step1} implies  $\{a,b\}_{K/K} = 0$.\ 
Take an element $x = \sum_{i=1}^n \{a_i,b_i\}_{K/K}$ 
with $h^{0,m}(x) = 0$. 
If $\{a_i,b_i\}_{K/K} \in \Im(\phi)$ for all $i$, 
then $x = \sum_{i=1}^n\{a_i,b_i\}_{K/K}$ is in $\Im(\phi)$. 
Hence $h^{0,m}(x) = 0$ implies $x = 0$ from the above diagram. 
When there exists $j$ such that $\{a_j,b_j\}_{K/K} \not\in \Im(\phi)$, 
we have $h^{0,m}(\{a_j,b_j\}_{K/K}) = 0$ 
and thus $\{a_j,b_j\}_{K/K} = 0$. 
By the induction hypothesis, 
$h^{0,m}(x) = h(\sum_{i\neq j}\{a_i,b_i\}_{K/K}) = 0$ implies 
$0 = \sum_{i\neq j}\{a_i,b_i\}_{K/K} = x$.  
By the same manner with the following commutative diagram, 
\[
  \xymatrix@C=15mm{
    T(K) \ar[rd]_{\simeq} \ar@{^{(}->}[r] & S(K) \ar@{->>}[d]^{h^{0,m}} \\
          & H^2(K,\mup^{\otimes 2}).
  }
\]
the map $h^{0,m}$ is injective on $S(K)$ and thus $S(K) \simeq \Z/p$. 

Next, we show 
$S(K) = \left(\Ubar^0\otimesM \Ubar^m\right)(K)$. 
Take a symbol $\{a,b\}_{L/K}\neq 0$ and prove that 
it is in $S(K)$ by induction on the exponent of $p$ in the ramification index $e_{L/K}$ of $L/K$. 

\sn
\textit{The case $p\nmid e_{L/K}$}. 
In this extension $L/K$, 
the norm map $N_{L/K}:\Ubar_{L}^0 \to \Ubar_K^0$ 
is surjective. 
There exist $\wt{c} \in \Ubar^0(L)$ and $d \in \Ubar^{m}(K) = \Ubar_K^{me}$ 
such that $\{N_{L/K}(\wt{c}), d\}_{K/K}$ 
is a generator of $S(k) \simeq \Z/p$. 
By the projection formula, we have
\[
  \{N_{L/K}(\wt{c}), d\}_{K/K} = \{\wt{c}, \Res_{L/K}(d)\}_{L/K} 
  = N_{L/K}(\{\wt{c}, \Res_{L/K}(d)\}_{L/L}). 
\]
Because of 
the symbol $\{\wt{c} , \Res_{L/K}(d)\}_{L/L}$ is also a generator 
of $S(L)$, 
for some $i$ we obtain
\begin{align*}
  \{a,b\}_{L/K} &= N_{L/K}(\{\wt{c}^i, \Res_{L/K}(d)\}_{L/L})\\
                &= \{N_{L/K}(\wt{c}^i), d\}_{K/K}.
\end{align*}

\sn
\textit{The case $p\mid e_{L/K}$}. 
There exists a finite extension $M/L$ of degree 
prime to $p$ and an intermediate field $M_1$ of $M/K$ 
such that $M/M_1$ is a cyclic and totally ramified extension of 
degree $p$. 
The norm map $N_{M/L}:U_M^0/p\to U_{L}^0/p$ 
is surjective and we have
 $\{a,b\}_{L/K} = \{N_{M/L}(\wt{a}), b\}_{L/K} = \{\wt{a}, \Res_{M/L}(b)\}_{M/K}$ 
for some $\wt{a} \in \Ubar_M^0$. 
There exists an element 
$c\in \Ubar^{m}(M_1) = \Ubar_{M_1}^{me_{M_1/k}}$ such that $\Sigma = M_1(\sqrt[p]{c})$ 
is a totally ramified nontrivial extension of $M_1$ 
and $\Sigma \neq M$. 
In fact if the element $c$ is 
in $\ol{U}_{M_1}^{i} \ssm (\ol{U}_{M_1}^{i+1})$  
($me_{M_1/k} < i < pe_0(M_1) = pe_0e_{M_1/k}, p\nmid i)$ 
then 
the upper ramification subgroups of $G := \Gal(\Sigma/M_1)$ 
(\cite{Ser68}, Chap.\ IV) 
is known to be  
\[
G = G^0 = G^1 = \cdots = G^{pe_0(M_1) - i} \supset G^{pe_0(M_1) - i+1} = 1
\] 
(\cite{Kaw02}, Lem.\ 2.1.5, 
see also \cite{Ser68}, Chap.\ V, Sect.\ 3). 
Hence we can choose $c$ such that the ramification break 
of $G$
is different from the one of $\Gal(M/M_1)$. 
Thus $N_{\Sigma/M_1}(U_{\Sigma}^0) + N_{M/M_1}(U_M^0) = U_{M_1}^0$ 
and we can take a symbol 
$\{N_{M/M_1}(\wt{d}), c\}_{M_1/M_1}$ such that 
it is a generator 
of $S(M_1)$ for some $\wt{d} \in \Ubar_{M}^0$ 
and thus $\{\wt{d}, \Res_{M/M_1}(c)\}_{M/M}$ 
is also a generator of $S(M)$. 
Therefore, for some $i$, we have 
\begin{align*}
  \{a,b\}_{L/K} &= \{\wt{a}, \Res_{M/L}(b)\}_{M/K}\\
                &= N_{M/K}\{\wt{a}, \Res_{M/L}(b)\}_{M/M}\\
                &= N_{M/K} \{\wt{d}^i, \Res_{M/M_1}(c)\}_{M/M}\\
                &= N_{M_1/K} \circ N_{M/M_1}\{\wt{d}^i, \Res_{M/M_1}(c)\}_{M/M}\\
                &= N_{M_1/K}\{\wt{d}^i, \Res_{M/M_1}(c)\}_{M/M_1}\\ 
                &= N_{M_1/K}\{N_{M/M_1}(\wt{d}^i), c\}_{M_1/M_1}\\ 
                &= \{N_{M/M_1}(\wt{d}^i), c\}_{M_1/K}. 
\end{align*}
From the induction hypothesis, 
the symbol $\{a,b\}_{L/K}$ is in $S(K)$. 
Therefore, we obtain $S(K) = \left(\Ubar^0 \otimesM \Ubar^m\right)(K)$. 
Hence $h^{0,m}:\Ubar^0 \otimesM \Ubar^m \to H^2(-,\mu_p^{\otimes 2})$ 
is an isomorphism and the assertion follows. 
\end{proof}

\section{Galois symbol map for elliptic curves}
\label{sec:local}
We keep the notation as in the last section: 
$k$ is a $p$-adic field  
assuming $p\neq 2$ with residue field $\F = \Ok/\mk$ 
and $e_0 = v_k(p)/(p-1)$. 
The main result here is the following theorem:
 
\begin{theorem}
\label{thm:main2} 
Let  $E_1, E_2$ be elliptic curves over $k$ 
with $E_i[p^n] \subset E_i(k)$ $(i=1,2)$. 
Assume that $E_1$ is not supersingular. 
Then 
the Galois symbol map 
\[
  h^2: K(k;E_1, E_2)/p^n \to H^2(k, E_1[p^n] \otimes E_2[p^n])
\]
is injective. 
\end{theorem}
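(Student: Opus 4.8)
The plan is to reduce the statement to the mod‑$p$ case and then to the unit‑group computations of Section \ref{sec:unit}. First I would dévisser in $n$. Since $E_i[p^n]\subset E_i(k)$, the multiplication maps fit into compatible exact sequences $0\to E_i[p]\to E_i[p^n]\to E_i[p^{n-1}]\to 0$ of (trivially acted upon) Galois modules, and correspondingly $K(k;E_1,E_2)/p^n$ sits in an exact ladder relating the $p$‑, $p^{n}$‑ and $p^{n-1}$‑quotients to the analogous cohomology groups. A five‑lemma argument then reduces the injectivity of $h^2$ for $p^n$ to the case $n=1$, provided one controls the connecting torsion terms; this is where the divisibility phenomena recorded in \cite{RS00} enter. (Alternatively one repeats the filtration argument below with $p^n$ in place of $p$.) I expect the substance of the theorem to lie entirely in the base case $n=1$.

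For $n=1$, the idea is to filter each factor by its formal group and to match the graded pieces with the Mackey functors $\Ubar^m$. Since $E_1[p]\subset E_1(k)$, the Weil pairing forces $\mup\subset k$, so we are exactly in the situation of Section \ref{sec:unit}. Because $E_1$ is \emph{not} supersingular, its formal group $\wh{E}_1$ has height one; hence its connected $p$‑torsion is $\wh{E}_1[p]\simeq\mup$ and $\wh{E}_1(\mk)/p$ is isomorphic, as a Mackey functor, to the lowest unit piece $\Ubar^0$ (equivalently to a $\Gm/p$‑type factor). In this way the connected–\'etale filtration of $E_1$ produces on $K(k;E_1,E_2)/p$ a finite filtration whose first graded piece carries a $\Gm/p$, resp.\ $\Ubar^0$, factor, while the higher pieces carry the $\Ubar^m$ ($m\ge 1$). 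I would set up the analogous formal‑group filtration on $E_2$; crucially its graded pieces $\wh{E}_2(\mk^i)/\wh{E}_2(\mk^{i+1})$ are again the additive Mackey functors underlying the $\Ubar^m$, \emph{regardless of whether $E_2$ is supersingular}, the height of $\wh{E}_2$ affecting only which indices $m$ occur.

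With the filtrations in place, the associated graded of $K(k;E_1,E_2)/p$ is built from Mackey products $\Ubar^{m}\otimesM\Ubar^{m'}$ in which, by the previous paragraph, at least one factor may be taken to be the $\Gm/p$‑ or $\Ubar^0$‑piece coming from $E_1$. These are computed by Lemma \ref{lem:key}: part (i) handles $\Gm/p\otimesM\Ubar^{m'}$ for \emph{every} $m'$, and parts (ii), (iii) handle $\Ubar^{0}\otimesM\Ubar^{m'}$ and $\Ubar^{pe_0}\otimesM\Ubar^{m'}$, each identified with $H^2(-,\mup^{\otimes 2})$ or $0$ via $h^{m,m'}$. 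Matching these against the connected–\'etale filtration of the Galois module $E_1[p]\otimes E_2[p]$ on the cohomology side, one checks that $h^2$ is filtered and induces an isomorphism onto its image on each graded quotient; since both filtrations are finite and separated, injectivity of $h^2$ on $K(k;E_1,E_2)/p$ follows. The norm and projection‑formula manipulations needed to reduce an arbitrary symbol $\{x,y\}_{L/K}$ to generating symbols over the base field are exactly of the type carried out in the proof of Lemma \ref{lem:key} (via Lemma \ref{lem:Step1}).

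I expect the main obstacle to be twofold. First, one must prove that the formal‑group filtration on the Somekawa side is compatible with the connected–\'etale filtration of $E_1[p]\otimes E_2[p]$ on the Galois‑cohomology side, i.e.\ that $h^2$ is strictly filtered; this is the step where the identification of $\wh{E}_i(\mk)/p$ with the unit Mackey functors must be made canonical rather than merely abstract. Second, and this is the conceptual heart, the non‑supersingularity of $E_1$ is used precisely so that $E_1$ contributes a genuine $\Gm/p$‑factor: this lets Lemma \ref{lem:key}(i), which is uniform in the second index $m'$, absorb the entire formal group of $E_2$ even when $E_2$ is supersingular and $\wh{E}_2$ has height two. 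Thus one never has to analyse the height‑two $[p]$‑map directly, which is what makes the asymmetric hypothesis on $E_1$ alone sufficient.
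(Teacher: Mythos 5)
Your reduction to $n=1$ is essentially the paper's: one writes the commutative ladder relating $K(k;E_1,E_2)/p^{n-1}$, $/p^{n}$, $/p$ to the corresponding $H^2$'s and observes that, because $E_i[p^n]$ is $k$-rational, the map $H^2(k,E_1[p^{n-1}]\otimes E_2[p^{n-1}])\to H^2(k,E_1[p^n]\otimes E_2[p^n])$ is injective, so induction on $n$ works; no appeal to divisibility results from \cite{RS00} is needed. (The paper also first passes to a prime-to-$p$ extension to discard additive reduction, a step you omit, and actually proves the stronger statement for the Mackey product $(E_1\otimesM E_2)(k)/p$.)

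The genuine gap is in your treatment of the base case. You propose to filter $E_2$ by the $\mk$-adic filtration of its formal group and to identify the graded pieces $\wh{E}_2(\mk^i)/\wh{E}_2(\mk^{i+1})$ with the Mackey functors $\Ubar^m$. This cannot work as stated: those graded pieces are copies of the additive group of the residue field, whereas $\Ubar^m$ is a multiplicative subquotient of $\kt/p$; and when $E_2$ is supersingular its formal group has height two, so its mod-$p$ points are not an iterated extension of $\mup$-type pieces obtained by naively filtering by $\mk^i$. Even granting such a filtration, you would need $h^2$ to be strictly compatible with filtrations on both sides, which you yourself flag as unresolved. The paper circumvents all of this: by (\ref{eq:image}) (Kawachi, Takemoto --- a theorem resting on the Katz--Lubin canonical subgroup, not on a formal-group filtration) the Kummer map identifies $E_i/p$, as a Mackey functor, with a \emph{direct sum} $\Ubar^{m_1}\oplus\Ubar^{m_2}$ inside $H^1(-,E_i[p])\simeq(\Gm/p)^{\oplus 2}$ (with a full $\Gm/p$ factor in the split multiplicative case), so $(E_1\otimesM E_2)/p$ splits into four summands $\Ubar^{m_i}\otimesM\Ubar^{n_j}$ on which the Galois symbol map is exactly $h^{m_i,n_j}$, and Lemma \ref{lem:key} finishes the proof with no strictness issue to check. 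Your instinct about where non-supersingularity of $E_1$ enters is correct --- it forces the first index to lie in $\{-1,0,pe_0\}$ so that parts (i)--(iii) of Lemma \ref{lem:key} apply uniformly in the second index --- but the bridge from $E_i/p$ to the $\Ubar^m$'s must be the Kummer-image computation, not a filtration of the formal group.
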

\begin{proof}
Consider the following diagram with exact rows: 
\[
  \xymatrix{
    K(k;E_1,E_2)/p^{n-1} \ar[d]_{h_{p^{n-1}}^2}\ar[r] & K(k;E_1,E_2)/p^n\ar[d]_{h_{p^n}^2} \ar[r] & K(k;E_1,E_2)/p \ar[d]_{h_{p}^2} \\
    H^2(k,E_1[p^{n-1}]\otimes E_2[p^{n-1}]) \ar[r]& H^2(k,E_1[p^n]\otimes E_2[p^{n}]) \ar[r] & H^2(k,E_1[p] \otimes E_2[p]).
  }
\]
The assumption 
$E_i[p^n] \subset E_i(k)$ 
implies the injectivity of the left lower map  
$H^2(k,E_1[p^{n-1}]\otimes E_2[p^{n-1}]) \to H^2(k, E_1[p^n]\otimes E_2[p^{n}])$. 
By induction on $n$, 
the assertion follows from the case of $n=1$. 
By taking a finite field extension whose extension degree is prime to $p$, 
we may assume that $E_1$ and $E_2$ do not have additive reductions. 
The assertion follows from the following slightly stronger claim than the required. 
\end{proof}

\begin{theorem}
\label{thm:m=p}
Let  $E_1, E_2$ be elliptic curves over $k$ 
with $E_i[p] \subset E_i(k)$ $(i=1,2)$. 
Assume that $E_1$ is not supersingular. 
Then, 
\[
  h^2: \left(E_1\otimesM E_2\right)(k)/p \to H^2(k,E_1[p]\otimes E_2[p])
\]
is injective. 
\end{theorem}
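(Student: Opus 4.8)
The plan is to push the whole computation into the multiplicative setting of Section~\ref{sec:unit}, where the symbol is controlled by Lemmas~\ref{lem:4.2.1} and~\ref{lem:key}. As in the proof of Theorem~\ref{thm:main2}, I would first pass to a finite extension $k'/k$ of degree prime to $p$: this preserves $E_i[p]\subset E_i(k')$ and the non-supersingularity of $E_1$, and since $N_{k'/k}\circ\Res_{k'/k}$ is multiplication by $[k':k]$, which is invertible modulo $p$, injectivity over $k'$ implies injectivity over $k$. Over $k'$ I may assume that $E_1$ has good ordinary or split multiplicative reduction and that $E_2$ has non-additive (possibly good supersingular) reduction. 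Note that the Weil pairing forces $\mup\subset k$, so the hypotheses of Section~\ref{sec:unit} hold.

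Since $E_i[p]\subset E_i(k)$, the module $E_i[p]\simeq(\Z/p)^2$ carries the trivial Galois action; fixing an isomorphism $E_i[p]\simeq\mup^{\oplus 2}$, the Kummer sequence yields a morphism of Mackey functors $h^1\colon E_i/p\inj H^1(-,E_i[p])\simeq(\Gm/p)^{\oplus 2}$. By definition $h^2$ on $E_1\otimesM E_2$ is the cup product of these classes followed by corestriction, so it equals the Kummer-induced map $(E_1\otimesM E_2)(k)/p\to\big((\Gm/p\otimesM\Gm/p)(k)\big)^{\oplus 4}$ composed with the isomorphism of Lemma~\ref{lem:4.2.1} on each summand. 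Hence injectivity of $h^2$ is equivalent to injectivity of this Kummer-induced map. The subtlety is that $\otimesM$ is not left exact, so although each $h^1$ is injective one must genuinely bound the image.

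To do so I would exploit the multiplicative direction of the non-supersingular curve $E_1$. For split multiplicative reduction the Tate uniformization gives a surjection of Mackey functors $\Gm/p\surj E_1/p$, so $(E_1\otimesM E_2)(k)/p$ is a quotient of $(\Gm/p\otimesM E_2)(k)/p$; for good ordinary reduction the height-one formal group is, over $\Ok$, a form of $\wh{\Gm}$, so the connected direction of $E_1$ contributes a $\Ubar^0$-factor (recall $\Ubar^0=\Ubar^1$ in $\Gm/p$ since $\#\F^{\times}$ is prime to $p$) and plays the same role. I then filter $E_2(k)/p$ by its formal-group filtration, whose graded quotients land under Kummer in the subfunctors $\Ubar^n\subset\Gm/p$, and invoke Lemma~\ref{lem:key}(i),(ii): the products $\Gm/p\otimesM\Ubar^n$ and $\Ubar^0\otimesM\Ubar^n$ are $H^2(-,\mup^{\otimes 2})$ in the appropriate low-filtration range and $0$ otherwise. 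Running the induction on the filtration exactly as in the proof of Lemma~\ref{lem:key}, each symbol is shown to map either isomorphically to a generator of $H^2(-,\mup^{\otimes 2})$ or to zero, and the Kummer-induced map is injective. Crucially only parts~(i) and~(ii) of Lemma~\ref{lem:key}, which impose no condition on the reduction type entering through the second slot, are used; this is precisely why $E_2$ is permitted to be supersingular, whereas $E_1$ must supply the $\Gm/p$- or $\Ubar^0$-factor.

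The main obstacle I expect is the comparison underlying the previous paragraph: identifying the image under Kummer theory of the formal-group filtration of $E_i(k)/p$ with the unit subfunctors $\Ubar^m$, i.e.\ matching the formal logarithm against the $p$-adic logarithm on units and pinning down the exact filtration jumps. This is where height one versus height two enters and where the hypothesis on $E_1$ is really used. Two pieces of bookkeeping remain: the finite contributions from the component group and from the reduction $\tilde E_i(\F)$, which must be checked to pair into detected or vanishing symbols, and the verification that $h^2$ is strictly compatible with the filtration so that the inductive assembly of injectivity is valid.
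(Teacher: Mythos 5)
Your overall strategy coincides with the paper's: identify $E_i/p$ with explicit sub-Mackey-functors of $(\Gm/p)^{\oplus 2}$ via the Kummer map and then quote Lemma~\ref{lem:key}. But the decisive technical input is exactly the step you set aside as ``the main obstacle'': the precise description of $\Im(h^1_{E})$ inside $(\ktp)^{\oplus 2}$. The paper does not derive this from Tate uniformization or from a formal-group filtration; it imports it wholesale from Kawachi and Takemoto as the displayed formula (\ref{eq:image}), which asserts that the image is a \emph{direct sum} $\Ubark^{0}\oplus\Ubark^{pe_0}$ (ordinary), $\Ubark^{p(e_0-t_0)}\oplus\Ubark^{pt_0}$ (supersingular), or $\ktp\oplus 1$ (split multiplicative), with $t_0$ the invariant (\ref{eq:t_0}) computed from the Katz--Lubin canonical subgroup. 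Your proposal replaces this by a filtration of $E_2(k)/p$ whose graded pieces land in various $\Ubar^n$, together with a ``strict compatibility'' of $h^2$ with that filtration that you do not verify; since $\otimesM$ is right exact but not exact, passing injectivity through a filtration rather than a direct sum decomposition is precisely where such an argument can fail, and you have not pinned down the filtration jumps (which is where $t_0$ and the height-two structure enter). As it stands the proof is not closed: either you must prove the analogue of (\ref{eq:image}) or cite it, and once you have it the filtration machinery is unnecessary because the decomposition is already a direct sum of Mackey functors.

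A second, concrete error: you claim that only parts (i) and (ii) of Lemma~\ref{lem:key} are used. In the ordinary case $E_1/p\simeq\Ubar^0\oplus\Ubar^{pe_0}$, and the $\Ubar^{pe_0}$ summand (the \'etale direction, which you relegate to ``bookkeeping'' about $\tilde E_1(\F)$) produces the products $\Ubar^{pe_0}\otimesM\Ubar^{p(e_0-t_0)}$ and $\Ubar^{pe_0}\otimesM\Ubar^{pt_0}$; these are killed only by Lemma~\ref{lem:key}(iii), which the paper invokes explicitly. Without part (iii) the injectivity claim in the ordinary case does not follow. The prime-to-$p$ reduction at the start and the identification of $h^2$ with the Kummer-induced map composed with Lemma~\ref{lem:4.2.1} are both correct and agree with the paper.
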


We recall the following results on the image 
of the Kummer map $h^1:E(k) \to H^1(k,E[p])$ 
for an elliptic curve $E$ over $k$. 
(\cite{Kaw02}, see also \cite{Tak11}, Rem.\ 3.2). 
Assume $E[p] \subset E(k)$ and  
fix an isomorphism of the Galois modules  
$E[p] \simeq (\mup)^{\oplus 2}$. 
From the isomorphism, 
we can identify $H^1(k,E[p])$ and $(\ktp)^{\oplus 2}$.  
On the latter group $\kt/p$, 
the higher unit groups $U^m_k = 1 + \m_k^m$ 
induce a filtration 
$\Ubark^m := \Im(U^m_k \to \ktp)$ 
as noted in the last section. 
In terms of this filtration, 
the image of 
$h^1_{E}:E(k)/p \inj H^1(k,E[p]) = (\ktp)^{\oplus 2}$  
is written precisely as follows (\Cf \cite{Tak11}): 
\begin{equation}
\label{eq:image}
  \Im(h^1_E) = \begin{cases}
       \Ubark^0 \oplus \Ubark^{pe_0} & \mbox{if $E$: ordinary,}\\
       \Ubark^{p(e_0 - t_0)} \oplus \Ubark^{pt_0} & 
\mbox{if $E$: supersingular},\\
\ktp \oplus 1 & \mbox{if $E$: (split) multiplicative}.
\end{cases}
\end{equation}
Here the invariant $t_0 := t_0(E)$ 
is defined 
by 
\begin{equation}
\label{eq:t_0}
t_0(E) = \max\set{i | P \in \wh{E}(\mk^i)\ \mathrm{for\ all}\ P \in \wh{E}[p]}
\end{equation}
where $\wh{E}$ is the formal group associated to $E$. 
Note also the invariant $t_0$ satisfies $0 < t_0 < e_0$ 
and is calculated from the theory of the canonical subgroup 
of Katz-Lubin (\Cf \cite{HH13}, Thm.\ 3.5).

\begin{proof}[Proof of Thm.\ \ref{thm:m=p}]
Fix isomorphisms  of Galois modules  
$E_1[p] \simeq \mup^{\oplus 2}$ and $E_2[p] \simeq \mup^{\oplus 2}$. 
From the isomorphism  we can identify 
$H^1(-,E_1[p]) \simeq (\Gm/p)^{\oplus 2}$ 
and 
$H^1(-,E_2[p]) \simeq (\Gm/p)^{\oplus 2}$.

\sn
(a) \textit{$E_1$ has split multiplicative reduction}: 
Consider the case that $E_1$ has split multiplicative reduction. 
We also assume that $E_2$ has supersingular good reduction. 
Other cases on $E_2$ 
are treated in the same way and much easier. 
From (\ref{eq:image}), 
the Kummer maps on $E_1$ and $E_2$ induces isomorphisms 
\[
  E_1/p \isomto \Gm/p ,\quad E_2/p \isomto \Ubar^{p(e_0 - t_0)} \oplus \Ubar^{pt_0},
\]
where $t_0 := t_0(E_2)$. 
Therefore 
$E_1/p\otimesM E_2/p \simeq (\Gm/p\otimesM \Ubar^{p(e_0-t_0)}) \oplus 
(\Gm/p \otimesM \Ubar^{pt_0})$. 
The Galois symbol map $h^2$ commutes with 
the map $h^{-1,p(e_0-t_0)}$ and $h^{-1, pt_0}$ 
defined in the last section and 
the injectivity of $h^2$ follows from Lemma \ref{lem:key} (i). 

\sn
(b) \textit{$E_1$ has ordinary good reduction}: 
Next we assume that $E_1$ has ordinary good reduction 
and $E_2$  is an supersingular elliptic curve over $k$. 
In this case also, 
by (\ref{eq:image}) we have 
\[
E_1/p \isomto \Ubar^{0} \oplus \Ubar^{pe_0}. 
\]
We have to show that the induced Galois symbol maps  
on 
\[
  \Ubar^0\otimesM \Ubar^{p(e_0-t_0)}, \Ubar^0\otimesM \Ubar^{pt_0}, \Ubar^{pe_0}\otimesM \Ubar^{p(e_0-t_0)},\ 
\mathrm{and}\ \Ubar^{pe_0} \otimesM \Ubar^{p(e_0-t_0)}
\]
are injective.
However, 
the latter two are trivial by Lemma \ref{lem:key} (iii). 
The rest of the assertions follow from Lemma \ref{lem:key} (ii). 
\end{proof}

\begin{proposition}
\label{prop:div}
Let  
$E_1,\ldots ,E_q$ be elliptic curves over $k$ 
with $E_i[p] \subset E_i(k)$ ($1\le i\le q$). 
Assume that $E_1$ is not a supersingular elliptic curve. 
Then for $q\ge 3$,
\[
\left(E_1\otimesM \cdots \otimesM E_q\right)(k)/p^n = K(k;E_1,\ldots , E_q)/p^n = 0
\]
\end{proposition}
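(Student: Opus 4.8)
The plan is to deduce both vanishing statements from the single claim that $\left(E_1/p \otimesM \cdots \otimesM E_q/p\right)(k)=0$. First I would perform the standard reductions. Since $K(k;E_1,\ldots,E_q)$ is by definition a quotient of $\left(E_1\otimesM\cdots\otimesM E_q\right)(k)$, it is enough to treat the Mackey product; and if $A:=\left(E_1\otimesM\cdots\otimesM E_q\right)(k)$ satisfies $A/p=0$, then $A=pA=p^nA$, so $A/p^n=0$ for all $n$. Thus I only need the mod-$p$ statement. Because $\otimesM$ is the (right exact) tensor product on the abelian category of Mackey functors, modding out the first factor gives $\left(E_1\otimesM\cdots\otimesM E_q\right)/p\cong (E_1/p)\otimesM E_2\otimesM\cdots\otimesM E_q$; this functor is killed by $p$, so the image of $pE_i$ in each remaining slot is zero and right exactness upgrades the isomorphism to $\left(E_1\otimesM\cdots\otimesM E_q\right)/p\cong E_1/p\otimesM\cdots\otimesM E_q/p$. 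Evaluating at $k$ (cokernels of Mackey functors are computed pointwise) reduces the proposition to $\left(E_1/p\otimesM\cdots\otimesM E_q/p\right)(k)=0$. As in the proof of Theorem \ref{thm:main2}, I would first pass to an extension of degree prime to $p$ so that no $E_i$ has additive reduction.

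Next I would feed in the Kummer description \eqref{eq:image}: each $E_i/p$ becomes a direct sum of two of the functors $\Gm/p,\ \Ubar^0,\ \Ubar^{pe_0},\ \Ubar^{p(e_0-t_0)},\ \Ubar^{pt_0}$, and, crucially, since $E_1$ is not supersingular its two summands lie in $\{\Gm/p,\Ubar^0,\Ubar^{pe_0}\}$, exactly the first-factor types handled by Lemma \ref{lem:key}. Distributing the Mackey product over these direct sums, I reduce to showing that each $q$-fold product $\Ubar^{a_1}\otimesM\cdots\otimesM\Ubar^{a_q}$ with $a_1\in\{-1,0,pe_0\}$ (writing $\Ubar^{-1}:=\Gm/p$ as in the notation $h^{-1,m}$) vanishes at $k$. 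I would pair the first two factors using Lemmas \ref{lem:4.2.1} and \ref{lem:key}: the product $\Ubar^{a_1}\otimesM\Ubar^{a_2}$ is, as a Mackey functor, isomorphic either to $0$ (always so when $a_1=pe_0$, by Lemma \ref{lem:key}(iii)) or to $H^2(-,\mup^{\otimes 2})$. In the first case the whole product is already $0$.

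The heart of the argument is the remaining case, where the product becomes $H^2(-,\mup^{\otimes 2})\otimesM\Ubar^{a_3}\otimesM\cdots\otimesM\Ubar^{a_q}$, with at least the one extra factor $\Ubar^{a_3}$ present because $q\ge 3$. Here I claim $H^2(-,\mup^{\otimes 2})\otimesM\Ubar^{a}=0$ as a Mackey functor for every $a$. To prove it I take a generating symbol $\{\xi,u\}_{K/K}$ with $\xi\in H^2(K,\mup^{\otimes 2})\cong\Z/p$ and $u\in\Ubar^a(K)$; assuming $u\neq 0$, set $L=K(\sqrt[p]{u})$. Because the corestriction $H^2(L,\mup^{\otimes 2})\to H^2(K,\mup^{\otimes 2})$ is an isomorphism (compatibility of the invariant maps), I may write $\xi=N_{L/K}(\wt{\xi})$, and then the projection formula (PF) gives $\{\xi,u\}_{K/K}=\{\wt{\xi},\Res_{L/K}(u)\}_{L/K}$; but $u$ is a $p$-th power in $L$, so $\Res_{L/K}(u)=0$ in $\Ubar^a(L)$ and the symbol vanishes. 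Since the same argument applies over every $p$-adic field, $H^2(-,\mup^{\otimes 2})\otimesM\Ubar^{a}$ is the zero Mackey functor, whence the full product $H^2(-,\mup^{\otimes 2})\otimesM\Ubar^{a_3}\otimesM\cdots\otimesM\Ubar^{a_q}$ is zero by associativity. This yields $\left(E_1/p\otimesM\cdots\otimesM E_q/p\right)(k)=0$ and, by the reductions above, completes the proof.

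The step I expect to be the main obstacle is precisely this last one: identifying $H^2(-,\mup^{\otimes 2})\otimesM\Ubar^a$ with the zero functor. It encodes the fact that a $p$-adic field has cohomological dimension $2$, so that once two factors have saturated the available cohomology the third unit factor can be absorbed as a norm; the delicate points to verify carefully are the surjectivity of corestriction on $H^2(-,\mup^{\otimes 2})$ and the claim that the restriction of $u$ to $K(\sqrt[p]{u})$ genuinely dies in the filtered quotient $\Ubar^a$.
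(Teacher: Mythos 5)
Your argument is correct, and its skeleton --- reduce to $q=3$ and to coefficients mod $p$, decompose each $E_i/p$ via the Kummer image (\ref{eq:image}), and pair off the first two factors using Lemma \ref{lem:key} --- is the same as the paper's. Where you genuinely diverge is the final vanishing step. The paper identifies the two-fold products $\Ubar^0\otimesM\Ubar^{a_2}$ (resp.\ $\Gm/p\otimesM\Ubar^{a_2}$) with $\Gm/p\otimesM\Gm/p$ via the symbol isomorphisms of Lemma \ref{lem:key} and then quotes the vanishing $(\Ubar^0)^{\otimesM i}\otimesM(\Gm/p)^{\otimesM j}=0$ for $i+j\ge 3$ from Lemma \ref{lem:4.2.1}; making that fully precise requires one further application of Lemma \ref{lem:key}(i) to replace the third factor $\Ubar^{a_3}$ by $\Gm/p$ as well, a point the paper leaves implicit. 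You instead prove directly that $H^2(-,\mup^{\otimes 2})\otimesM\Ubar^{a}=0$ by the norm trick: corestriction is surjective on $H^2$ over a $p$-adic field, so $\xi=N_{L/K}(\wt{\xi})$ with $L=K(\sqrt[p]{u})$, and (PF) moves the symbol up to $L$, where $u$ dies in $L^{\times}/p$ and hence in the subfunctor $\Ubar^a(L)$. This is a clean, self-contained substitute for the appeal to Lemma \ref{lem:4.2.1} that absorbs the third factor uniformly, with no further case analysis; what it costs you is having to verify the surjectivity of corestriction by hand, which the paper gets for free from the lemmas it has already established. One small inaccuracy: your parenthetical claim that $\Ubar^{a_1}\otimesM\Ubar^{a_2}=0$ \emph{always} holds when $a_1=pe_0$ fails if $E_2$ has split multiplicative reduction, so that the second factor is $\Gm/p$: Lemma \ref{lem:key}(iii) only covers a second factor $\Ubar^m$ with $m\ge 1$, and in fact $\Ubar^{pe_0}\otimesM\Gm/p\simeq\Gm/p\otimesM\Ubar^{pe_0}\simeq H^2(-,\mup^{\otimes 2})\neq 0$ by Lemma \ref{lem:key}(i). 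This does not harm your proof, since the dichotomy ``$0$ or $H^2(-,\mup^{\otimes 2})$'' still holds and the $H^2$ branch is killed by your key claim.
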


\begin{proof}
It is enough to show $(E_1\otimesM E_2 \otimesM E_3)/p = 0$. 
We show only the case $E_1$ has ordinary reduction and 
$E_i$ has supersingular reduction for each $i = 2,3$. 
As in the above proof, we have
\[
  E_1/p \isomto \Ubar^0\oplus \Ubar^{pe_0} ,\quad 
  E_i/p \isomto \Ubar^{p(e_0 - t_0(E_i))} \oplus \Ubar^{pt_0(E_i)} \ (i = 2,3),
\]
By Lemma \ref{lem:key}, we have 
\[
\Ubar^0\otimesM \Ubar^{p(e_0 - t_0(E_2))} \simeq \Ubar^0\otimesM \Ubar^{pt_0(E_2)} \simeq \Gm \otimesM \Gm.
\]
Hence the assertion follows from Lemma \ref{lem:4.2.1}.
\end{proof}

\begin{remark}
From the same arguments in the proof of Theorem\ \ref{thm:main2}, 
we also obtain 
the injectivity of the Galois symbol map 
\[
  h^2: K(k;\Gm, E)/p^n \to H^2(k, \Gm[p^n] \otimes E[p^n])
\]
under the assumption $E[p^n] \subset E(k)$. 
As in \cite{HH13} we can determine 
the image of the above $h^2$ and have
\[
K(k;\Gm,E)/p^n \simeq \begin{cases}
  \Z/p^n,& \mbox{if $E$: multiplicative},\\
  (\Z/p^n)^{\oplus 2},& \mbox{if $E$: good reduction}.
\end{cases}
\]
It is known that the Somekawa $K$-group $K(k;\Gm, E)$ 
is isomorphic to 
the homology group $V(E)$ 
of the complex 
\[
  K_2(k(E)) \onlong{\oplus \dP} \bigoplus_{P\in E:\ \mathrm{closed\ points}} \kP^{\times} \onlong{\sum N_{\kP/k}} \kt.
\]
By the class field theory of curves over local field (\cite{Saito85b}, \cite{Yos03}), 
we have $V(E)/p^n \simeq \pi_1(E)^{\ab,\geo}_{\tor}/p^n$. 
Therefore, the above computations give the structure of $\pi_1(E)^{\ab}$.
\end{remark}


\def\cprime{$'$}
\providecommand{\bysame}{\leavevmode\hbox to3em{\hrulefill}\thinspace}
\providecommand{\href}[2]{#2}

\noindent
Toshiro Hiranouchi \\
Department of Mathematics, Graduate School of Science, Hiroshima University\\
1-3-1 Kagamiyama, Higashi-Hiroshima, 739-8526 Japan\\
Email address: {\tt hira@hiroshima-u.ac.jp}

\end{document}